\documentclass[sn-basic]{sn-jnl}

\usepackage{graphicx}%
\usepackage{multirow}%
\usepackage{amsmath,amssymb,amsfonts}%
\usepackage{amsthm}%
\usepackage{mathrsfs}%
\usepackage[title]{appendix}%
\usepackage{xcolor}%
\usepackage{textcomp}%
\usepackage{manyfoot}%
\usepackage{booktabs}%
\usepackage{algorithm}%
\usepackage{algorithmicx}%
\usepackage{algpseudocode}%
\usepackage{listings}%

\theoremstyle{thmstyleone}%
\newtheorem{theorem}{Theorem}%
\newtheorem{corollary}[theorem]{Corollary}%
\newtheorem{lemma}[theorem]{Lemma}%
\newtheorem{proposition}[theorem]{Proposition}%
\theoremstyle{thmstyletwo}%
\newtheorem{example}{Example}%
\theoremstyle{thmstylethree}%

\usepackage{tikz, tikz-3dplot}
\usepackage{tikz-network}
\usetikzlibrary{positioning, arrows.meta}

\renewcommand{\Re}{\mathbb{R}}
\renewcommand{\b}[1]{\boldsymbol{#1}}
\newcommand{\cA}{\mathcal{A}}
\newcommand{\cN}{\mathcal{N}}
\DeclareMathOperator{\conv}{conv}
\DeclareMathOperator{\rank}{rank}

\begin{document}
\title[Convexification of a bilinear set]{On the convexification of a bilinearly constrained set with a common variable}
\author[1]{\fnm{Hyun-Ju} \sur{Oh}}\email{hoh@clemson.edu}

\author*[1]{\fnm{Boshi} \sur{Yang}}\email{boshiy@clemson.edu}

\affil*[1]{\orgdiv{School of Mathematical and Statistical Sciences}, \orgname{Clemson University}, \orgaddress{\street{220 Parkway Dr}, \city{Clemson}, \postcode{29634}, \state{South Carolina}, \country{United States}}}

\date{\today}
\abstract{We study the convexification of sets defined by box constraints and bilinear inequality constraints in which all bilinear terms share a common variable. For this structured class of nonconvex sets, we provide a complete characterization of the extreme points of the convex hull and derive an explicit description of all facet-defining inequalities, thereby obtaining a full polyhedral description of the convex hull. In addition, we develop a polynomial-time separation algorithm for the facet-defining inequalities and illustrate its effectiveness through examples. These results offer both structural insight into the geometry of the convex hull and algorithmic tools for identifying violated inequalities.}%

\keywords{Nonconvex optimization, Bilinear programming, Convex hull, Global optimization}

\maketitle
\section{Introduction}

Optimization problems with bilinear terms in their objective functions or constraints arise in many applications and are generally nonconvex. A central challenge in solving such problems is the construction of strong convex relaxations of the feasible region. Classical and widely used approaches include the McCormick envelope \citep{mccormick1976computability, al1990jointly, quesada1995global}, which gives the convex hull of the graph of a single bilinear term over a rectangular domain, and the Reformulation-and-Linearization Technique (RLT) \citep{sherali1990hierarchy, sherali1992new}, which generates valid linear inequalities by multiplying constraints and linearizing the resulting products. However, when several bilinear terms share a common variable, applying these relaxations term by term can overlook valid inequalities that couple the products through the common factor.

Motivated by this gap, we study a basic shared-variable set that already exhibits nontrivial coupling. After rescaling and relabeling if necessary, let $\b{a}\in\Re^n$ satisfy $0<a_1\leq\cdots\leq a_n<1$, and define
\begin{equation}\label{eqn:S}
    S^n(\b{a})
    :=
    \left\{
        (x,\b{y})\in[0,1]^{n+1}
        \;\middle|\;
        xy_j\leq a_j \quad \forall j\in[n]
    \right\}.
\end{equation}
When no ambiguity can arise, we write $S^n$ in place of $S^n(\b{a})$. The defining feature of $S^n$ is that all bilinear inequalities share the variable $x$, while each product is subject to an individual nontrivial upper bound. Although the description of $S^n$ is simple, its convex-hull geometry is not. When $n=1$, the projected McCormick relaxation is exact: together with the box constraints, the inequality
\[
    x+y_1\leq 1+a_1
\]
describes $\conv(S^1)$. For $n\geq2$, by contrast, the term-wise projected McCormick relaxation need not be exact. The shared variable can induce additional inequalities that couple several components of $\b{y}$. For example, when $n=2$ and $a_1<a_2$, the inequality
\[
    x+a_2y_1+(1-a_1)y_2\leq1+a_2
\]
is valid for $\conv(S^2)$ but is not implied by the two individual projected McCormick inequalities. Fig.~\ref{fig:conv(S2)} illustrates the resulting difference for $a_1=1/4$ and $a_2=1/2$.

\begin{figure}[htbp]
  \centering
  \begin{tabular}{cc}
    \tdplotsetmaincoords{60}{110}
    \begin{tikzpicture}[scale=2.5,tdplot_main_coords]
      \begin{scope}
        \coordinate (O) at (0,0,0);
        \draw[-stealth] (0,0,0) -- (1.2,0,0) node[left]{\small$x$};
        \draw[-stealth] (0,0,0) -- (0,1.2,0) node[right]{\small$y_1$};
        \draw[-stealth] (0,0,0) -- (0,0,1.2) node[above]{\small$y_2$};
        \draw[thick] (1, 1/4, 0) -- (1, 0, 0) -- (1, 0, 1/2);
        \draw[thick] (0, 0, 1) -- (1/2, 0, 1);
        \draw[thick] (0, 0, 1) -- (0, 1, 1) -- (1/4, 1, 1) -- (0, 1, 1) -- (0, 1, 0) -- (1/4, 1, 0);
        \draw[ultra thick] plot[tension=1] (1, 0, 1/2) -- (1/2, 0, 1) -- (1/2, 3/4, 1) -- (1, 0.25, 0.5) -- cycle;
        \draw[ultra thick] plot[tension=1] (1, 0.25, 0.5) -- (1, 0.25, 0) -- (0.25, 1, 0) -- (0.25, 1, 1) -- (1/2, 3/4, 1);
        \draw[ultra thick] plot[tension=1] (1, 0, 0.5) -- (1, 0, 0) -- (1, 1/4, 0);
        \draw[ultra thick] plot[tension=1] (1/4, 1, 0) -- (0, 1, 0) -- (0, 1, 1) -- (1/4, 1, 1);
        \draw[ultra thick] plot[tension=1] (1/2, 0, 1) -- (0, 0, 1) -- (0, 1, 1);
        \draw[fill = blue, fill opacity = 0.3] (1/2, 0, 1) -- (1/2, 3/4, 1) -- (1, 1/4, 1/2) -- (1, 0, 1/2) -- (1/2, 0, 1);
        \draw[fill=blue, fill opacity = 0.3] (1, 1/4, 1/2) -- (1, 1/4, 0) -- (1/4, 1, 0) -- (1/4, 1, 1) -- (1/2, 3/4, 1) -- (1, 1/4, 1/2);
        \draw[fill = blue, fill opacity = 0.3] (1, 0, 0) -- (1, 0, 1/2) -- (1, 1/4, 1/2) -- (1, 1/4, 0) -- cycle;
        \draw[fill = blue, fill opacity = 0.3] (0, 0, 1) -- (1/2, 0, 1) -- (1/2, 3/4, 1) -- (1/4, 1, 1) -- (0, 1, 1) -- cycle;
        \draw[fill = blue, fill opacity = 0.3] (0, 1, 0) -- (1/4, 1, 0) -- (1/4, 1, 1) -- (0, 1, 1) -- cycle;
      \end{scope}
    \end{tikzpicture} 
    & 
    \tdplotsetmaincoords{60}{110}
    \begin{tikzpicture}[scale=2.5,tdplot_main_coords]
      \begin{scope}
        \coordinate (O) at (0,0,0);
        \draw[-stealth] (0,0,0) -- (1.2,0,0) node[left]{\small$x$};
        \draw[-stealth] (0,0,0) -- (0,1.2,0) node[right]{\small$y_1$};
        \draw[-stealth] (0,0,0) -- (0,0,1.2) node[above]{\small$y_2$};
        \draw[thick] (1, 1/4, 0) -- (1, 0, 0) -- (1, 0, 1/2);
        \draw[thick] (0, 0, 1) -- (1/2, 0, 1);
        \draw[thick] (0, 0, 1) -- (0, 1, 1) -- (1/4, 1, 1) -- (0, 1, 1) -- (0, 1, 0) -- (1/4, 1, 0);
        \draw[ultra thick] plot[tension=1] (1, 0, 1/2) -- (1/2, 0, 1) -- (1/2, 1/2, 1) -- (1, 0.25, 0.5) -- cycle;
        \draw[ultra thick] plot[tension=1] (1, 0.25, 0) -- (0.25, 1, 0) -- (0.25, 1, 1) -- (1, 0.25, 0.5) -- cycle;
        \draw[ultra thick] plot[tension=1] (0.5, 0.5, 1) -- (0.25, 1, 1);
         \draw[ultra thick] plot[tension=1] (1, 0, 0.5) -- (1, 0, 0) -- (1, 1/4, 0);
        \draw[ultra thick] plot[tension=1] (1/4, 1, 0) -- (0, 1, 0) -- (0, 1, 1) -- (1/4, 1, 1);
        \draw[ultra thick] plot[tension=1] (1/2, 0, 1) -- (0, 0, 1) -- (0, 1, 1);
        \draw[fill = blue, fill opacity = 0.3] (1/2, 0, 1) -- (1/2, 1/2, 1) -- (1, 1/4, 1/2) -- (1, 0, 1/2) -- (1/2, 0, 1);
        \draw[fill=blue, fill opacity = 0.3] (1, 1/4, 1/2) -- (1, 1/4, 0) -- (1/4, 1, 0) -- (1/4, 1, 1) -- (1, 1/4, 1/2);
        \draw[fill=blue, fill opacity = 0.3] (1, 1/4, 1/2) -- (1/4, 1, 1) -- (1/2, 1/2, 1) -- (1, 1/4, 1/2);
        \draw[fill = blue, fill opacity = 0.3] (1, 0, 0) -- (1, 0, 1/2) -- (1, 1/4, 1/2) -- (1, 1/4, 0) -- cycle;
        \draw[fill = blue, fill opacity = 0.3] (0, 0, 1) -- (1/2, 0, 1) -- (1/2, 1/2, 1) -- (1/4, 1, 1) -- (0, 1, 1) -- cycle;
        \draw[fill = blue, fill opacity = 0.3] (0, 1, 0) -- (1/4, 1, 0) -- (1/4, 1, 1) -- (0, 1, 1) -- cycle;
      \end{scope}
    \end{tikzpicture} \\
  \end{tabular}
    \caption{The projected McCormick relaxation (left) and exact convex hull (right) of $S^2$ when $a_1 = \frac14$ and $a_2 = \frac12$.}
  \label{fig:conv(S2)}
\end{figure}

Sets of the form \eqref{eqn:S} arise naturally as local structures in bipartite bilinear programs, in which the variables are partitioned into two groups and products involve variables from different groups \citep{dey2019new}. Focusing on the products incident to a single variable yields the shared-variable pattern studied here, particularly when individual upper bounds on those products are available. Related bilinear structures occur in models for wastewater management \citep{galan1998optimal, faria2011planning, teles2012global, castro2015tightening}, farm management \citep{bloemhof1996generalized}, power distribution planning \citep{frank2015optimal}, pooling \citep{haverly1978studies, foulds1992bilinear, tawarmalani2013convexification, boland2016new, gupte2017relaxations, chen2024tightening}, supply chain management \citep{al1992generalized, nahapetyan2008bilinear, osman2010bilinear}, and network interdiction \citep{davarnia2017simultaneous, azizi2024shortest}. Thus, $S^n$ serves as a basic building block for strengthening relaxations of broader classes of nonconvex optimization models.

Our work is closely related to the convexification of bilinear and multilinear product sets. Convex-hull and envelope descriptions in spaces containing auxiliary product variables have been developed for a single bilinear product and for more general bounded bilinear monomials \citep{belotti2010valid, belotti2011linear, nguyen2018deriving, anstreicher2021convex, chen2022tightening}. For bounded monomials on two-variable cones, \cite{belotti2025convex} derives upper envelopes in general dimension and lower envelopes in the bivariate case. A closely related study by \citet{khademnia2025convexification} develops convexification results for bilinear products over a simplex and a network polytope. Specializing their setting to
a one-dimensional simplex and a box yields the set
\[
    G^n := \left\{(x,\b{y},\b{w}) \in [0,1]^{n+1} \times \Re^n \;\middle|\;
        w_i=xy_i \quad \forall \, i \in [n] \right\}.
\]
The set $S^n$ in \eqref{eqn:S} is obtained by imposing $w_i\leq a_i$ for all $i\in[n]$ and projecting onto the $(x,\b{y})$-space. For this specialization, $\conv(G^n)$ is described by the McCormick inequalities. Consequently, applying their convexification to $G^n$ and then imposing the product bounds recovers the McCormick relaxation with those bounds, whose projection need not equal $\conv(S^n)$. Our approach instead incorporates the product bounds into the convexification and gives a complete description of $\conv(S^n)$ in the original $(x,\b{y})$-space. The resulting inequalities capture interactions among the bounded products
sharing $x$ and can strengthen lifted relaxations of bilinear programs beyond the McCormick inequalities.

This paper makes three main contributions. First, we characterize all extreme points of $\conv(S^n)$ and thereby establish that the convex hull is a polytope. In particular, every nonzero extreme point belongs to a face obtained by fixing either $x=1$ or $y_i=1$ for some $i\in[n]$. Second, we derive a complete facet description of $\conv(S^n)$. Beyond the variable bounds, the nontrivial facets form a structured family of \emph{leap inequalities} that couple multiple bilinear constraints through the ordered upper bounds $a_1,\ldots,a_n$. Third, we show that a most violated leap inequality can be identified by solving a longest-path problem on a directed acyclic graph, yielding an $\mathcal O(n^2)$ separation algorithm without enumerating the entire family. Finally, two examples illustrate how leap inequalities can complement second-order cone constraints in strengthening convex relaxations of bipartite bilinear programs.

The remainder of the paper is organized as follows. Section~\ref{sec:extpts} characterizes the extreme points of $\conv(S^n)$. Section~\ref{sec:facets} derives its complete facet description. Section~\ref{sec:sep} develops the separation algorithm and presents two examples illustrating the strengthening obtained from leap inequalities, both alone and in combination with the second-order cone relaxation studied in \cite{anstreicher2021convex}.

\paragraph{Notation.}
Vectors are denoted by boldface lowercase letters and matrices by uppercase letters. For a positive integer $n$, let $[n]:=\{1,\ldots,n\}$. Given $\b{z}\in\Re^n$ and $k\in[n]$, let $\b{z}_{1:k}\in\Re^k$ denote the subvector consisting of the first $k$ components of $\b{z}$. 
For $j\in[n]$, $\b{e}_j\in\Re^n$ denotes the $j$th standard unit vector. The convex hull of a set $C$ is denoted by $\conv(C)$. For $\b{a},\b{b}\in\Re^n$, their Hadamard product is
\[
    \b{a}\circ\b{b}
    :=
    (a_1b_1,\ldots,a_nb_n).
\]

\section{The vertex description of $\conv(S^n)$} \label{sec:extpts}
In this section, we characterize the extreme points of $\conv(S^n)$. To this end, we first define $n+1$ subsets of $S^n$ by fixing one variable at a time as follows.
\[
C^n_i := \begin{cases}
    \{(x, \b{y}) \in S^n \mid x = 1\} & \text{ if } i = 0, \\
    \{(x, \b{y}) \in S^n \mid y_i = 1\} & \text{ if } i \in [n].
\end{cases}
\]
We also note that $S^n$ is full-dimensional. Indeed, since $0<a_j<1$ for all $j\in[n]$, the point $(\frac12, a_1, \ldots, a_n)$ belongs to the interior of $S^n$. Consequently, $\conv(S^n)$ is also full-dimensional.

\subsection{Extreme points of $\conv(C^n_i)$}
We first characterize the extreme points of $\conv(C^n_i)$ for each $i\in\{0,\ldots,n\}$. Let $E^n_i$ denote the set of extreme points of $\conv(C^n_i)$. Since $C^n_i$ is compact, every extreme point of its convex hull belongs to $C^n_i$, and thus $E^n_i\subseteq C^n_i$.

For $i=0$, we have $C^n_0=\{1\}\times\prod_{j=1}^n[0,a_j]$, and hence
\[
    E^n_0 =\left\{\left(1,\b{b}\circ(a_1,\ldots,a_n)\right) \middle|\ \b{b}\in\{0,1\}^n\right\}.
\]
Now fix $i\in[n]$. For each $k\in[i]$, let
\[
    \b{\gamma}^{\,k} :=\left(\frac{a_1}{a_k},\ldots,\frac{a_k}{a_k},1,\ldots,1\right) \in[0,1]^n.
\]
Define
\begin{align*}
    D^n_{i,0} &:={\left\{(0,\b{b}) \middle|\ \b{b}\in\{0,1\}^n,\ b_i=1\right\}},\\
    D^n_{i,k} &:={\left\{\left(a_k,\b{b}\circ\b{\gamma}^{\,k}\right) \middle|\ \b{b}\in\{0,1\}^n,\ b_i=b_k=1\right\}}, \qquad k\in[i].
\end{align*}
That is, a point $(a_k,\b{y})$ belongs to $D^n_{i,k}$ precisely when $y_i=y_k=1$, each $y_j$ with $j\leq k$ belongs to $\{0,a_j/a_k\}$, and each $y_j$ with $j>k$ belongs to $\{0,1\}$.

We first show that all points in these sets are extreme points of $\conv(C^n_i)$.

\begin{lemma} \label{lem:D0}
    For every $i \in [n]$, $D^n_{i,0} \subseteq E^n_i$.
\end{lemma}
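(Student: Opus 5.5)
Fix $i\in[n]$ and a point $p=(0,\b{b})\in D^n_{i,0}$, so $\b{b}\in\{0,1\}^n$ with $b_i=1$. I will first check that $p\in C^n_i$: it lies in $[0,1]^{n+1}$, satisfies $y_i=b_i=1$, and $xy_j=0\le a_j$ for every $j$. It remains to show that $p$ is an extreme point of $\conv(C^n_i)$. My plan is to exhibit a linear functional that is uniquely maximized over $C^n_i$ (hence over $\conv(C^n_i)$) at $p$. A point uniquely maximizing a linear function over a set is an extreme point of its convex hull, because any convex combination achieving the maximum must use only maximizers.

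The candidate functional is
\[
\phi(x,\b{y}) := -x+\sum_{j=1}^n (2b_j-1)\,y_j .
\]
This rewards $y_j$ when $b_j=1$ and penalizes it when $b_j=0$. For any $(x,\b{y})\in C^n_i\subseteq[0,1]^{n+1}$, each summand satisfies $(2b_j-1)y_j\le b_j$, with equality iff $y_j=b_j$; this uses $y_j\in[0,1]$. Likewise $-x\le 0$, with equality iff $x=0$. Hence $\phi(x,\b{y})\le \sum_j b_j=\phi(p)$, and equality forces $x=0$ and $\b{y}=\b{b}$, that is, $(x,\b{y})=p$.

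To pass to the convex hull, I will note that $\phi$ is linear, so its maximum over $\conv(C^n_i)$ equals its maximum over $C^n_i$. If $p=\sum_t\lambda_t q_t$ with $q_t\in C^n_i$ and $\lambda_t>0$, then $\phi(p)=\sum_t\lambda_t\phi(q_t)\le\phi(p)$, which forces $\phi(q_t)=\phi(p)$ and therefore $q_t=p$ for every $t$. So $p$ cannot be written as a nontrivial convex combination of points of $\conv(C^n_i)$, and $p\in E^n_i$.

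I expect no real obstacle here. The only point needing care is the strictness of each term-wise inequality, which holds because the box constraints alone pin down the maximizer; the bilinear constraints and the ordering of the $a_j$ are never used, beyond confirming that $p$ is feasible.
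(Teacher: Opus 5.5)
Your proof is correct and is essentially the paper's argument. The paper simply observes that $D^n_{i,0}\subseteq C^n_i\subseteq\conv(C^n_i)\subseteq[0,1]^{n+1}$, and that a vertex of the cube is extreme in any subset containing it. Your functional $\phi$ is an explicit certificate that $(0,\mathbf{b})$ is an exposed vertex of $[0,1]^{n+1}$, which is the same point made in more detail.
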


\begin{proof}
    Clearly, $D^n_{i,0} \subseteq C^n_i \subseteq \conv(C^n_i) \subseteq [0,1]^{1+n}$. Since the points in $D^n_{i,0}$ are extreme points of $[0,1]^{1+n}$, they are also extreme points of $\conv(C^n_i)$.
\end{proof}

\begin{lemma}\label{lem:Dk}
    For every $i \in [n]$ and $k \in [i]$, $D^n_{i,k} \subseteq E^n_i$.
\end{lemma}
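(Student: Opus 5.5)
Fix $i\in[n]$, $k\in[i]$, and a point $\b{p}=(a_k,\b{b}\circ\b{\gamma}^{\,k})\in D^n_{i,k}$ with $b_i=b_k=1$. The plan is to argue directly from the definition of an extreme point. First we check that $\b{p}\in C^n_i$. Indeed $p_{y_i}=1$, since either $i=k$ (then $\gamma^k_k=1$) or $i>k$ (then $\gamma^k_i=1$). For the bilinear constraints, if $j\le k$ then $a_k\cdot b_j a_j/a_k\le a_j$. If $j>k$ then $a_k b_j\le a_k\le a_j$, using the ordering $a_1\le\cdots\le a_n$.

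Next, suppose $\b{p}=\sum_t\lambda_t \b{q}^t$ is a convex combination of points $\b{q}^t=(x^t,\b{y}^t)\in C^n_i$ with all $\lambda_t>0$. It suffices to treat such combinations, since every point of $\conv(C^n_i)$ is a finite convex combination of points of $C^n_i$. We will show that every $\b{q}^t=\b{p}$. The argument proceeds coordinate by coordinate, and the $y_k$-coordinate drives it. Since $p_{y_k}=1$ and $y^t_k\le 1$, we get $y^t_k=1$ for all $t$. Then $x^t=x^ty^t_k\le a_k$, and since $\sum\lambda_t x^t=a_k$, we conclude $x^t=a_k$ for all $t$.

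Now every $\b{q}^t$ lies in $\{a_k\}\times\prod_j I_j$, where $I_j=[0,a_j/a_k]$ for $j\le k$ and $I_j=[0,1]$ for $j>k$. This holds because $x^ty^t_j\le a_j$ with $x^t=a_k$ gives $y^t_j\le a_j/a_k$, while for $j>k$ the bound $a_j/a_k\ge1$ is dominated by the box constraint. The point $\b{p}$ has each $y$-coordinate at an endpoint of $I_j$, so $\b{p}$ is a vertex of this box. Hence every $y^t_j$ equals $p_{y_j}$, because a convex combination of numbers in an interval equals an endpoint only if all of them do. Therefore $\b{q}^t=\b{p}$ for all $t$, and $\b{p}$ is an extreme point of $\conv(C^n_i)$, i.e., $\b{p}\in E^n_i$.

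The only step needing care is the forcing of $x^t=a_k$. It relies on $b_k=1$, which makes $y_k=1$ tight and so couples $x$ to $a_k$. It also needs the case $i=k$ to be consistent with $\gamma^k_k=1$. Once $x$ is fixed, the rest reduces to the box argument used in Lemma~\ref{lem:D0}.
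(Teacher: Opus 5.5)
Your proof is correct and follows essentially the same route as the paper. The condition $y_k=1$ forces every point in a positive-weight decomposition over $C^n_i$ to satisfy $y_k=1$ and hence $x=a_k$; after that, each $y_j$-coordinate of $\b{p}$ is an endpoint of $[0,a_j/a_k]$ or $[0,1]$, so all points of the decomposition coincide with $\b{p}$. Your explicit check that $\b{p}\in C^n_i$ is a small addition that the paper leaves implicit.
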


\begin{proof}
    Fix $i \in [n]$, $k \in [i]$, and let $(\bar{x},\bar{\b{y}}) \in D^n_{i,k}$.
    By the definition of $D^n_{i,k}$, we have $\bar{x}=a_k$, $\bar{y}_k=1$, and, for every $s \in [n] \setminus \{k\}$,
    \[
        \bar{y}_s \in
        \begin{cases}
            \left\{0,\dfrac{a_s}{a_k}\right\}, & s < k,\\[2mm]
            \{0,1\}, & s>k.
        \end{cases}
    \]
    Suppose that
    \[
        (\bar{x},\bar{\b{y}}) = \lambda(x^1,\b{y}^1) +(1-\lambda)(x^2,\b{y}^2)
    \]
    for some $(x^1,\b{y}^1),(x^2,\b{y}^2)\in\conv(C^n_i)$ and $\lambda\in(0,1)$. For each $t\in[2]$, write
    \[
        (x^t,\b{y}^t) = \sum_{r\in R_t}\rho^t_r (x^{t,r},\b{y}^{t,r}),
    \]
    where $(x^{t,r},\b{y}^{t,r})\in C^n_i$, $\rho^t_r>0$, and $\sum_{r\in R_t}\rho^t_r=1$.
    Thus, $(\bar{x},\bar{\b{y}})$ is a convex combination, with strictly positive coefficients, of the points $(x^{t,r},\b{y}^{t,r})\in C^n_i$.
    Since $0\leq y^{t,r}_k\leq 1$ for every $t$ and $r$, while $\bar{y}_k=1$, it follows that $y^{t,r}_k=1$ for every $t\in[2]$ and $r\in R_t$.
    Because $(x^{t,r},\b{y}^{t,r})\in C^n_i\subseteq S^n$, we have $x^{t,r}y^{t,r}_k\leq a_k$. Hence $x^{t,r}\leq a_k$. On the other hand, $\bar{x}=a_k$ is a convex combination of the values $x^{t,r}$. Since all these values are at most $a_k$ and all coefficients are strictly positive, we
    must have $x^{t,r}=a_k$ for every $t\in[2]$ and $r\in R_t$.
    Now fix $s\in[k]$. Since $(x^{t,r},\b{y}^{t,r})\in S^n$ and $x^{t,r}=a_k$,
    \[
        0\leq y^{t,r}_s\leq \frac{a_s}{a_k}.
    \]
    Because $\bar{y}_s\in \left\{0,\frac{a_s}{a_k}\right\}$, $\bar{y}_s$ is an endpoint of this interval. The fact that $\bar{y}_s$ is a convex combination of the values $y^{t,r}_s$ therefore implies $y^{t,r}_s=\bar{y}_s$ for every $t\in[2]$ and $r\in R_t$.
    Similarly, for every $s>k$, we have $0\leq y^{t,r}_s\leq 1$ and $\bar{y}_s\in\{0,1\}$.
    Thus, $y^{t,r}_s=\bar{y}_s$ for every $t\in[2]$ and $r\in R_t$.
    Consequently, $(x^{t,r},\b{y}^{t,r}) = (\bar{x},\bar{\b{y}})$ for every $t\in[2]$ and $r\in R_t$.
    It follows that $(x^1,\b{y}^1) = (x^2,\b{y}^2) = (\bar{x},\bar{\b{y}})$.
    Therefore, $(\bar{x},\bar{\b{y}})$ is an extreme point of $\conv(C^n_i)$, and hence $D^n_{i,k}\subseteq E^n_i$.
\end{proof}

Lemmas~\ref{lem:D0} and~\ref{lem:Dk} establish that
\[
    \bigcup_{k=0}^iD^n_{i,k}\subseteq E^n_i.
\]
We next prove the reverse inclusion. For $x>0$, define the cutoff index and perturbation direction
\[
    \ell(x):=\max\bigl(\{j\in[n]\mid a_j<x\}\cup\{0\}\bigr), \qquad
    \b{d}(x,\b{y}) :=\left(x,-\sum_{j=1}^{\ell(x)}y_j\b{e}_j\right),
\]
where an empty sum is interpreted as zero.

We begin by identifying the possible values of the $x$-coordinate of a point in $E^n_i$.
\begin{lemma}\label{lem:extreme_x}
    If $(x,\b{y})\in E^n_i$, then $x\in\{0,a_1,\ldots,a_i\}$.
\end{lemma}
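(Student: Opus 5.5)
Let $(x,\b{y})\in E^n_i$; since $E^n_i\subseteq C^n_i$, we have $y_i=1$ and hence $x\le a_i$. We argue by contradiction. Suppose $x\notin\{0,a_1,\ldots,a_i\}$. Then $x>0$, and we will show that $(x,\b{y})$ is the midpoint of two distinct points of $C^n_i$. The tool is the perturbation direction $\b{d}(x,\b{y})$ defined just before the statement, with $\ell:=\ell(x)$.

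Because $x\notin\{a_1,\ldots,a_i\}$ and $x\le a_i$, we get $x<a_i$. Hence $\ell<i$, so every index $j\le\ell$ satisfies $a_j<x$, and every index $j>\ell$ satisfies $a_j>x$. The last strict inequality needs two checks: $a_j\ge x$ follows from the definition of $\ell$ for $j>\ell$, and equality is excluded because $x$ is not any $a_j$ with $j\le i$, while indices $j>i$ have $a_j\ge a_i>x$. For small $\varepsilon>0$, consider
\[
    (x,\b{y})\pm\varepsilon\,\b{d}(x,\b{y}) = \Bigl((1\pm\varepsilon)x,\;(1\mp\varepsilon)y_1,\ldots,(1\mp\varepsilon)y_\ell,\;y_{\ell+1},\ldots,y_n\Bigr).
\]
These two points differ because $x>0$, and $(x,\b{y})$ is their midpoint. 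It remains to verify that both lie in $C^n_i$ for small enough $\varepsilon$.

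\emph{Box constraints.} We have $(1\pm\varepsilon)x\in[0,1]$ for small $\varepsilon$, since $0<x\le a_i<1$. Also $(1\mp\varepsilon)y_j\in[0,1]$, because scaling a number in $[0,1]$ by a factor near $1$ can leave $[0,1]$ only if $y_j=1$ and the factor exceeds $1$. That case cannot happen for $j\le\ell$: such $j$ satisfy $xy_j\le a_j<x$, which forces $y_j<1$.

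\emph{The fixed coordinate.} Since $\ell<i$, the coordinate $y_i=1$ is unchanged, so the condition $y_i=1$ defining $C^n_i$ is preserved.

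\emph{Bilinear constraints.} For $j\le\ell$, the new product is $(1+\varepsilon)(1-\varepsilon)xy_j=(1-\varepsilon^2)xy_j\le a_j$. For $j>\ell$, the new product is $(1\pm\varepsilon)xy_j\le(1+\varepsilon)x$, and this is below $a_j$ for small $\varepsilon$ because $x<a_j$ strictly.

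Thus $(x,\b{y})$ is not an extreme point of $\conv(C^n_i)$, a contradiction. The main obstacle is the strictness of $x<a_j$ for $j>\ell$: the index $\ell(x)$ is chosen exactly so that every constraint with $a_j<x$ is handled by the product-preserving scaling, while every remaining constraint has slack. This hinges on the hypothesis $x\notin\{a_1,\ldots,a_i\}$ together with $x\le a_i$.
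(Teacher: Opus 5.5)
Your proof is correct and follows essentially the same argument as the paper: perturb along $\pm\varepsilon\,\b{d}(x,\b{y})$ with $\ell=\ell(x)<i$, use the identity $(1-\varepsilon^2)xy_j\le a_j$ for $j\le\ell$, and use the strict slack $x<a_{\ell+1}\le a_j$ for $j>\ell$. You are somewhat more explicit than the paper in justifying $a_j>x$ for every $j>\ell$ and in checking the box constraints.
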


\begin{proof}
    Let $(x,\b{y})\in E^n_i$. Since $(x,\b{y})\in C^n_i$, we have $y_i=1$ and hence $x\leq a_i$.
    Suppose, to the contrary, that $x>0$ and $x \notin \{a_1,\ldots,a_i\}$.
    Let $\ell:=\ell(x)$. Thus $\ell<i$, $a_j<x$ for $j\leq\ell$, and $a_j>x$ for $j>\ell$. Since
    $y_j\leq a_j/x<1$ for $j\leq\ell$, we may choose $\epsilon\in(0,1)$ such that
    \[
        x(1+\epsilon)<a_{\ell+1} \quad \text{and} \quad (1+\epsilon)y_j \leq 1 \qquad (j\leq\ell).
    \]
    Set
    \[
        (x^\pm,\b{y}^\pm) := (x,\b{y})\pm\epsilon\b{d}(x,\b{y}).
    \]
    The box constraints hold, and $y_i^\pm=1$ because $i>\ell$. Moreover,
    \[
        x^\pm y_j^\pm = xy_j(1-\epsilon^2) \leq a_j \quad (j\leq\ell), \qquad
        x^\pm y_j^\pm \leq x(1+\epsilon) < a_{\ell+1}\leq a_j \quad (j>\ell).
    \]
    Hence both perturbed points belong to $C^n_i$. They are distinct and have midpoint $(x,\b{y})$, contradicting extremality.
\end{proof}

We now determine the possible $\b{y}$-coordinates. 
\begin{lemma}\label{lem:extreme_y}
    Let $(x,\b{y})\in E^n_i$.
    \begin{enumerate}
        \item If $x=0$, then $(x,\b{y})\in D^n_{i,0}$.
        \item If $x>0$, then there exists $k\in[i]$ such that $x=a_k$ and $(x,\b{y})\in D^n_{i,k}$.
    \end{enumerate}
\end{lemma}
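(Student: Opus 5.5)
Both parts come from the same idea: if some coordinate $y_s$ is strictly inside its allowed interval, we can move $y_s$ alone in both directions and stay in $C^n_i$, which contradicts extremality. Throughout, let $(x,\b{y})\in E^n_i$; since $(x,\b{y})\in C^n_i$, we have $y_i=1$.

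\textbf{Case $x=0$.} The only remaining constraints on each $y_s$ with $s\neq i$ are $0\leq y_s\leq 1$, because the bilinear constraints read $0\leq a_s$. Suppose some $y_s\in(0,1)$ with $s\neq i$. Then $(x,\b{y}\pm\epsilon\b{e}_s)$ lies in $C^n_i$ for small $\epsilon>0$: it stays in the box, $y_i$ is unchanged, and $x=0$. These two points have midpoint $(x,\b{y})$, which contradicts extremality. Hence $\b{y}\in\{0,1\}^n$ with $y_i=1$, so $(x,\b{y})\in D^n_{i,0}$.

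\textbf{Case $x>0$.} By Lemma~\ref{lem:extreme_x}, $x\in\{a_1,\ldots,a_i\}$. Since $a_1\leq\cdots\leq a_n$ may contain ties, I choose $k$ as the \emph{largest} index with $a_k=x$. Then $k\leq i$: if $a_i=x$ this is immediate, and otherwise $x<a_i$ (because $x\leq a_i$ from $y_i=1$), so every $j$ with $a_j=x$ satisfies $j<i$.

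I next read off the bounds on each $y_s$ when $x=a_k$. For $s\leq k$ we have $a_s\leq a_k$, so $y_s\in[0,a_s/a_k]$. For $s>k$ we have $a_s>a_k$, so $a_s/a_k>1$ and the bilinear constraint is implied by $y_s\leq 1$; thus $y_s\in[0,1]$. These intervals agree exactly with the coordinates of $\b{\gamma}^{\,k}$.

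Now repeat the perturbation argument. Suppose some $y_s$ with $s\neq i$ lies strictly inside its interval. Perturbing $\b{y}$ by $\pm\epsilon\b{e}_s$ keeps $x=a_k$ and $y_i=1$ fixed and keeps every constraint satisfied, because each constraint involving $y_s$ is just the interval bound at the fixed value $x=a_k$. This again contradicts extremality. So each $y_s$ with $s\neq i$ is an endpoint of its interval, that is, $\b{y}=\b{b}\circ\b{\gamma}^{\,k}$ for some $\b{b}\in\{0,1\}^n$ with $b_i=1$.

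It remains to show $b_k=1$, i.e.\ $y_k=1$; this is the step I expect to be the main obstacle. If $k=i$, then $y_k=y_i=1$ and we are done. If $k<i$ and $y_k=0$, then no constraint is active at the value $x=a_k$, and the plan is to perturb $x$ itself. Consider $(x,\b{y})\pm\epsilon\,\b{d}(x,\b{y})$, but with the cutoff $\ell$ taken as the largest index satisfying $a_\ell<a_k$ rather than $\ell(x)$. This scales $x$ by $1\pm\epsilon$ and the first $\ell$ coordinates of $\b{y}$ by $1\mp\epsilon$.

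I check feasibility index by index:
\begin{itemize}
\item For $j\leq\ell$, the product $xy_j$ becomes $xy_j(1-\epsilon^2)\leq a_j$, exactly as in Lemma~\ref{lem:extreme_x}. The bound $(1+\epsilon)y_j\leq 1$ holds for small $\epsilon$ because $y_j\leq a_j/a_k<1$.
\item For indices $j$ with $a_j=a_k$, the definition of $k$ as the largest such index gives $j\leq k$. If $y_j>0$, the constraint $xy_j\leq a_j$ would be tight and scaling $x$ up would violate it.
\end{itemize}
So the real point to argue is that, with the tie-broken choice of $k$, all $y_j$ with $a_j=a_k$ vanish when $y_k=0$. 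If some $y_j=1$ with $a_j=a_k$ and $j<k$, the fix is to replace $k$ by that $j$. Hence I will instead define $k$ as an index with $a_k=x$ and $y_k=1$ when one exists, and run the perturbation above only when no such index exists. That case split is where the care goes. For indices $j$ with $a_j>a_k$, we get $x(1+\epsilon)y_j<a_j$ for small $\epsilon$. With these checks the perturbed points lie in $C^n_i$ (note $i>\ell$, so $y_i=1$ is preserved), which contradicts extremality.

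Finally, I must reconcile the choice of $k$ with the $\b{\gamma}^{\,k}$ form. When $a_j=a_k$ we have $a_j/a_k=1$, so the endpoint description of the $y_s$ is the same for every tied index, and $\b{y}=\b{b}\circ\b{\gamma}^{\,k}$ with $b_i=b_k=1$. Hence $(x,\b{y})\in D^n_{i,k}$.
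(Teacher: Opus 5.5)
Your argument is correct and is essentially the paper's. Single-coordinate perturbations force every $y_s$ to an endpoint of its interval, which matches $\b{\gamma}^{\,k}$ (the same vector for all tied indices), and the $\b{d}(x,\b{y})$ perturbation rules out the case where every index $j$ with $a_j=x$ has $y_j=0$. The only slip is your claim that the largest index with $a_k=x$ satisfies $k\le i$ ``immediately'' when $a_i=x$. A tie $a_{i+1}=a_i$ breaks this, and your revised choice of ``an index with $a_k=x$ and $y_k=1$'' is likewise not automatically in $[i]$. This is harmless: when $a_i=x$, take $k=i$, since $y_i=1$. That is exactly what the paper does by searching only over $k\in[i]$ from the outset.
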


\begin{proof}
    Since $(x,\b{y})\in C^n_i$, we have $y_i=1$.
    First suppose that $x=0$. If $y_j\in(0,1)$ for some $j\in[n]\setminus\{i\}$, then for sufficiently small $\delta>0$, both
    \[
        (0,\b{y}+\delta\b{e}_j) \quad \text{and} \quad (0,\b{y}-\delta\b{e}_j)
    \]
    belong to $C^n_i$, are distinct, and have midpoint $(0,\b{y})$. This contradicts extremality. Hence
    $y_j\in\{0,1\}$ for every $j\in[n]$, and therefore $(0,\b{y})\in D^n_{i,0}$.

    Now suppose that $x>0$. For each $j\in[n]$, define $u_j:=\min\left\{1, a_j/x \right\}$.
    Feasibility gives $0\leq y_j\leq u_j$. If $y_j\in(0,u_j)$ for some $j$, then, for sufficiently small
    $\delta>0$, both $(x,\b{y}+\delta\b{e}_j)$ and $(x,\b{y}-\delta\b{e}_j)$ belong to $C^n_i$, are distinct, and have midpoint $(x,\b{y})$, again contradicting extremality.
    Consequently, for each $j\in[n]$,
    \begin{equation}\label{eq:Ci-coordinate-endpoints}
        y_j \in 
        \begin{cases}
            \left\{0,\dfrac{a_j}{x}\right\}, & j \le \ell,\\[2mm]
            \{0,1\}, & j > \ell,
        \end{cases}
    \end{equation}
    where $\ell:=\ell(x)$. 
    
    By Lemma~\ref{lem:extreme_x}, $x=a_k$ for some $k\in[i]$. 
    We claim that $y_k=1$ for at least one such $k$. Otherwise, $y_k=0$ whenever $k\in[i]$ and $a_k=x$. Since $y_i=1$, we then have $a_i>x$, so, by monotonicity, every index $j$ satisfying
    $a_j=x$ has $j<i$ and $y_j=0$.
    As in the proof of Lemma~\ref{lem:extreme_x}, for sufficiently small $\epsilon>0$ the two points
    \[
        (x,\b{y})\pm\epsilon\b{d}(x,\b{y})
    \]
    belong to $C^n_i$: the same calculation applies for $j\leq\ell$; the coordinates with $a_j=x$ are zero; and the constraints with $a_j>x$ have strict slack. Since $\ell<i$, the equality $y_i=1$ is preserved. This contradicts extremality and proves the claim.

    Choose $k\in[i]$ with $a_k=x$ and $y_k=1$. By the endpoint condition and the ordering of $\b{a}$,
    \[
        y_j\in\left\{0,\frac{a_j}{a_k}\right\} \quad (j\leq k), \qquad
        y_j\in\{0,1\} \quad (j>k).
    \]
    Together with $y_i=y_k=1$, this yields $(x,\b{y})\in D^n_{i,k}$.
\end{proof}

\begin{proposition}\label{prop:extpts}
    For every $i\in[n]$,
    \[
        E^n_i=\bigcup_{k=0}^iD^n_{i,k}.
    \]
\end{proposition}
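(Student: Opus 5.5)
The plan is to prove the two inclusions separately. Both are already essentially available from the preceding lemmas, so the proof will mostly be an assembly.

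For the inclusion $\bigcup_{k=0}^i D^n_{i,k}\subseteq E^n_i$, we invoke Lemma~\ref{lem:D0}, which gives $D^n_{i,0}\subseteq E^n_i$. We then invoke Lemma~\ref{lem:Dk} for each $k\in[i]$, which gives $D^n_{i,k}\subseteq E^n_i$. Taking the union over $k\in\{0,\ldots,i\}$ yields the first inclusion.

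For the reverse inclusion $E^n_i\subseteq\bigcup_{k=0}^i D^n_{i,k}$, let $(x,\b{y})\in E^n_i$. Since $E^n_i\subseteq C^n_i\subseteq[0,1]^{n+1}$, we have $x\geq 0$, so we split into the cases $x=0$ and $x>0$.
\begin{itemize}
\item If $x=0$, part~1 of Lemma~\ref{lem:extreme_y} places $(x,\b{y})$ in $D^n_{i,0}$.
\item If $x>0$, part~2 of Lemma~\ref{lem:extreme_y} provides some $k\in[i]$ with $x=a_k$ and $(x,\b{y})\in D^n_{i,k}$.
\end{itemize}
In either case $(x,\b{y})$ lies in the union, which proves the reverse inclusion.

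There is no real obstacle at this stage. The substantive work, namely ruling out $x$-values outside $\{0,a_1,\ldots,a_i\}$ and forcing some $y_k=1$ with $a_k=x$ via the perturbation direction $\b{d}(x,\b{y})$, has already been done in Lemmas~\ref{lem:extreme_x} and~\ref{lem:extreme_y}. Only one point needs checking: that Lemma~\ref{lem:extreme_y} indeed produces an index $k\le i$, and not just some index with $a_k=x$. This holds because Lemma~\ref{lem:extreme_y} explicitly chooses $k\in[i]$, so the resulting point lies in $D^n_{i,k}$ with $k\in[i]$, as the statement requires.
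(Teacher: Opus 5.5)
Your proposal is correct and matches the paper's proof: the forward inclusion follows from Lemmas~\ref{lem:D0} and~\ref{lem:Dk}, and the reverse inclusion from the two cases $x=0$ and $x>0$ of Lemma~\ref{lem:extreme_y}. You also checked that the index $k$ lies in $[i]$, and this is indeed part of the statement of that lemma.
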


\begin{proof}
    Lemmas~\ref{lem:D0} and~\ref{lem:Dk} give $\bigcup_{k=0}^iD^n_{i,k}\subseteq E^n_i$. Conversely, let
    $(x,\b{y})\in E^n_i$. Lemma~\ref{lem:extreme_y} implies that $(x,\b{y})$ belongs to $D^n_{i,0}$ when $x=0$, and to $D^n_{i,k}$ for some $k\in[i]$ when $x>0$. Therefore, $E^n_i\subseteq\bigcup_{k=0}^iD^n_{i,k}$.
\end{proof}

Proposition~\ref{prop:extpts} completes the characterization of $E^n_i$. In the next subsection, we use this result to derive the vertex description of $\conv(S^n)$.

\subsection{Extreme points of $\conv(S^n)$}
For convenience, define
\[
    \mathcal U^n := \left(\bigcup_{i=0}^n C^n_i\right)\cup\{\b{0}\}, \qquad
    \mathcal V^n := \left(\bigcup_{i=0}^n E^n_i\right)\cup\{\b{0}\}
    = E^n_0 \cup \left(\bigcup_{i=1}^n \bigcup_{k=0}^i D^n_{i,k}\right) \cup \{\b{0}\}.
\]
When the underlying parameter vector is not $\b{a}$, we display it as an argument. For example, $\mathcal V^m(\b{c})$ denotes the analogous set constructed from $\b{c}$.

We first recall a standard identity for convex hulls.

\begin{lemma} \label{lem:union}
    For any family of sets $\{A_\alpha\}_{\alpha \in \mathcal{A}}$, 
    \[
    \conv \left( \bigcup_{\alpha \in \mathcal{A}} A_\alpha \right) = 
    \conv \left( \bigcup_{\alpha \in \mathcal{A}} \conv(A_\alpha) \right).
    \]
\end{lemma}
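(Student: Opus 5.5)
We prove the identity by showing the two inclusions separately; both follow directly from the definition of the convex hull as the smallest convex set containing a given set. Write $A:=\bigcup_{\alpha\in\mathcal{A}}A_\alpha$ and $B:=\bigcup_{\alpha\in\mathcal{A}}\conv(A_\alpha)$. The whole argument reduces to observing that $A\subseteq B\subseteq\conv(A)$, after which monotonicity and idempotence of $\conv$ give the claim.

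For the first inclusion, note that $A_\alpha\subseteq\conv(A_\alpha)$ for every $\alpha$, so $A\subseteq B$. Since $\conv(\cdot)$ is monotone with respect to inclusion, this gives $\conv(A)\subseteq\conv(B)$.

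For the reverse inclusion, fix $\alpha\in\mathcal{A}$. Since $A_\alpha\subseteq A\subseteq\conv(A)$ and $\conv(A)$ is convex, minimality of $\conv(A_\alpha)$ among convex supersets of $A_\alpha$ yields $\conv(A_\alpha)\subseteq\conv(A)$. Taking the union over $\alpha$ gives $B\subseteq\conv(A)$. Because $\conv(A)$ is a convex set containing $B$, minimality again gives $\conv(B)\subseteq\conv(A)$.

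Combining the two inclusions proves the identity. Neither step presents a real obstacle. The only point deserving care is that the family $\{A_\alpha\}_{\alpha\in\mathcal{A}}$ may be infinite. We therefore argue purely through the characterization ``smallest convex superset'' rather than through finite convex combinations indexed by $\alpha$. If one prefers the combinatorial description, it also works: any point of $\conv(B)$ is a finite convex combination of points of finitely many sets $\conv(A_{\alpha_r})$. Each such point is in turn a finite convex combination of points of $A_{\alpha_r}$, and expanding the products of coefficients exhibits the original point as a finite convex combination of points of $A$.
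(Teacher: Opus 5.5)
Your proof is correct: the sandwich $\bigcup_{\alpha}A_\alpha\subseteq\bigcup_{\alpha}\conv(A_\alpha)\subseteq\conv\bigl(\bigcup_{\alpha}A_\alpha\bigr)$, combined with monotonicity and minimality of $\conv$, is exactly the standard argument, and it handles infinite families without any extra work. The paper itself gives no proof, since it simply cites the lemma as a standard identity, so your argument supplies precisely the justification it omits.
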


\begin{proposition}\label{prop:extpts-union}
    For every $n\geq1$,
    \[
        \conv(S^n)=\conv(\mathcal U^n)=\conv(\mathcal V^n).
    \]
\end{proposition}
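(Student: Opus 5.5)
The plan is to prove the two equalities as a chain of inclusions:
\[
\conv(\mathcal V^n)\subseteq\conv(\mathcal U^n)\subseteq\conv(S^n)\subseteq\conv(\mathcal U^n)=\conv(\mathcal V^n).
\]

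\emph{The first two inclusions.} Each $C^n_i\subseteq S^n$, and $\b{0}\in S^n$, so $\mathcal U^n\subseteq S^n$. For $i=0$, $E^n_0\subseteq C^n_0$ is explicit. For $i\in[n]$, each $E^n_i\subseteq C^n_i$, as noted before Lemma~\ref{lem:D0}. Hence $\mathcal V^n\subseteq\mathcal U^n\subseteq S^n$, and both inclusions follow by monotonicity of $\conv$.

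\emph{The equality $\conv(\mathcal U^n)=\conv(\mathcal V^n)$.} Each $C^n_i$ is compact, and it lies in a finite-dimensional space. By the Krein--Milman (Minkowski) theorem, $\conv(C^n_i)=\conv(E^n_i)$. Lemma~\ref{lem:union} then gives
\[
\conv(\mathcal U^n)=\conv\Bigl(\bigcup_i\conv(C^n_i)\cup\{\b{0}\}\Bigr)=\conv\Bigl(\bigcup_i\conv(E^n_i)\cup\{\b{0}\}\Bigr)=\conv(\mathcal V^n).
\]
Here the singleton $\{\b{0}\}$ is treated as one more member of the family.

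\emph{The main step: $S^n\subseteq\conv(\mathcal U^n)$.} Take $(x,\b{y})\in S^n$. If $(x,\b{y})=\b{0}$ there is nothing to prove. Otherwise set
\[
t:=\max\{x,y_1,\ldots,y_n\}\in(0,1], \qquad (x',\b{y}'):=(x,\b{y})/t.
\]
Then $(x,\b{y})=t(x',\b{y}')+(1-t)\b{0}$, so it suffices to show $(x',\b{y}')\in\mathcal U^n$. By construction $(x',\b{y}')\in[0,1]^{n+1}$, and the maximizing coordinate equals $1$. The bilinear constraints could fail after scaling, since $x'y'_j=xy_j/t^2$, and this is the only real obstacle.

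To handle it, I would split into two cases.
\begin{itemize}
\item If the maximum is attained by $x$, then $t=x$ and $x'=1$. In this case $x'y'_j=xy_j/x=y_j$, and I would need $y_j\le a_j$, which need not hold. So instead of scaling both coordinates uniformly, I would scale only $x$ up to $1$ and $\b{y}$ down. The \emph{hyperbolic ray} $(x/s,\,s\b{y})$ preserves the products, but it is not a convex combination with $\b{0}$.
\item A cleaner route is to scale along rays from $\b{0}$ that increase only a subset of coordinates.
\end{itemize}

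Concretely, I would write $(x,\b{y})$ as a convex combination of $\b{0}$, of points with $x=1$, and of points with some $y_i=1$, as follows.
\begin{enumerate}
\item Consider the segment from $(x,\b{y})$ in the direction $(x,\b{0})$, i.e.\ the points $(x(1+\epsilon),\b{y})$, together with the opposite direction $\b{d}$-type moves of Lemma~\ref{lem:extreme_x}.
\item Alternatively, argue via extreme points directly. $S^n$ is compact, so $\conv(S^n)$ is the convex hull of its extreme points, and these lie in $S^n$. It then suffices to show that every extreme point of $\conv(S^n)$ lies in $\mathcal U^n$: if $(x,\b{y})\in S^n$ has $x<1$, all $y_i<1$, and $(x,\b{y})\ne\b{0}$, then it is not extreme in $\conv(S^n)$.
\item For such a point with $x>0$, I would use the perturbation $(x,\b{y})\pm\epsilon(x,\b{y})$ in the coordinates with slack, and $\pm\epsilon\,\b{d}$ elsewhere. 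The cleanest choice is $(1\pm\epsilon)(x,\b{y})$ when every constraint with $y_j>0$ has slack. When some constraints are tight, I would use $(x(1\pm\epsilon),\,\b{y}$ with tight coordinates scaled by $(1\mp\epsilon))$. The products then become $xy_j(1-\epsilon^2)\le a_j$, and the box constraints hold because $x<1$ and $y_j<1$.
\item If $x=0$, perturb any $y_j\in(0,1)$ by $\pm\delta\b{e}_j$.
\end{enumerate}

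In every case the point is the midpoint of two distinct points of $S^n$, so it is not extreme. I expect this perturbation argument, mirroring Lemma~\ref{lem:extreme_x}, to be the main step; it yields $\conv(S^n)\subseteq\conv(\mathcal U^n)$.
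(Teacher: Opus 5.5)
Your proof is correct, but only items 2--4 of the main step carry it. The uniform scaling toward $\b{0}$, the hyperbolic ray, and item 1 are abandoned attempts and should be deleted. Items 2--4 take a genuinely different route from the paper.

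The paper proves $S^n\subseteq\conv(\mathcal V^n)$ by induction on $n$. It splits on the last coordinate $z$ into $z=1$, $0\leq z\leq a_{\ell+1}$, and $a_{\ell+1}<z<1$. It treats the last case by rescaling $x$ by $\alpha=1+a_{\ell+1}-z$ and writing explicit convex combinations of vertices with first coordinate $1$ or $a_{\ell+1}$.

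You instead apply Minkowski's theorem to $\conv(S^n)$, which is compact because $S^n$ is. Its extreme points lie in $S^n$, and you show that no point of $S^n\setminus\mathcal U^n$ is extreme:
\begin{itemize}
\item For $x=0$, you move a fractional $y_j$ by $\pm\delta\b{e}_j$.
\item For $0<x<1$ with all $y_j<1$, let $T:=\{j\mid xy_j=a_j\}$ and set $x^\pm=(1\pm\epsilon)x$, $y^\pm_j=(1\mp\epsilon)y_j$ for $j\in T$, and $y^\pm_j=y_j$ otherwise.
\end{itemize}
This single perturbation handles every case with $x>0$, so the all-slack subcase need not be treated separately. Tight products become $(1-\epsilon^2)a_j$. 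Slack products and box constraints survive for small $\epsilon$ because $x<1$ and $y_j<1$. The two points differ because $x>0$.

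This is the move of Lemma~\ref{lem:extreme_x} transplanted from $C^n_i$ to $S^n$. It makes the argument much shorter, avoids the induction entirely, and uses no tool beyond what the paper already invokes, since the paper also relies on Minkowski's theorem through $\conv(C^n_i)=\conv(E^n_i)$. What the paper's induction buys in return is explicit convex-combination certificates, lifting a vertex decomposition of $(x,\b{y})$ in dimension $\ell$ to one of $(x,\b{y},z)$, together with the geometric picture of Case~III. Your argument is purely existential.
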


\begin{proof}
    Since $\conv(C^n_i)=\conv(E^n_i)$ for every $i\in\{0,\ldots,n\}$, Lemma~\ref{lem:union} and Proposition~\ref{prop:extpts} give
    \[
        \conv(\mathcal U^n)
        =\conv\left(\left(\bigcup_{i=0}^nE^n_i\right)\cup\{\b{0}\}\right)
        =\conv(\mathcal V^n).
    \]
    Moreover, $\mathcal U^n\subseteq S^n$, and hence $\conv(\mathcal U^n)\subseteq\conv(S^n)$. It remains to prove that
    \[
        S^n\subseteq\conv(\mathcal V^n).
    \]
    We proceed by induction on $n$.

    For $n=1$, every $(x,y_1)\in S^1$ satisfies
    \[
        x+y_1\leq1+xy_1\leq1+a_1,
    \]
    where the first inequality follows from $(1-x)(1-y_1) \geq 0$. Thus, $S^1$ is contained in the polygon with vertices
    \[
        (0,0),\quad(0,1),\quad(a_1,1),\quad(1,a_1),\quad(1,0),
    \]
    which is precisely $\conv(\mathcal V^1)$; see Fig.~\ref{fig:conv(S1)}.

    \begin{figure}[ht]
    \centering
    \begin{tikzpicture}[scale=2.5]
      \draw[-stealth] (-0.2,0) -- (1.5,0) node[right] {\small$x$};
      \draw[-stealth] (0,-0.2) -- (0,1.5) node[above] {\small$y_1$};
      \draw plot[tension=1] (0, 1) -- (1/4, 1) -- (5/16, 4/5) -- (3/8, 2/3) -- (7/16, 4/7) -- (1/2, 1/2) -- (5/8, 2/5) -- (3/4, 1/3) -- (1, 1/4) -- (1, 0);
      \draw[ultra thick, color=blue] (1, 1/4) -- (1, 0) node[above right] {$\conv(C^1_0)$};
      \draw[ultra thick, color=blue] (0, 1) -- (1/4, 1) node[above, xshift=2mm]{$\conv(C^1_1)$};
      \draw[color=blue, fill=blue] (0,0) circle (0.8pt) node[below left]{$\mathbf{0}$};
    \end{tikzpicture}
    \qquad
    \begin{tikzpicture}[scale=2.5]
      \draw[-stealth] (-0.2,0) -- (1.5,0) node[right] {\small$x$};
      \draw[-stealth] (0,-0.2) -- (0,1.5) node[above] {\small$y_1$};
      \draw plot[tension=1] (0, 1) -- (1/4, 1) -- (5/16, 4/5) -- (3/8, 2/3) -- (7/16, 4/7) -- (1/2, 1/2) -- (5/8, 2/5) -- (3/4, 1/3) -- (1, 1/4) -- (1, 0);
      \draw[ultra thick, color=blue] (1/4, 1) -- (1, 1/4) -- (1, 0) -- (0, 0) -- (0, 1) -- (1/4, 1) node[right, xshift=2mm] {$\conv(S^1)$};
      \draw[fill = blue, fill opacity = 0.3] (0, 0) -- (1, 0) -- (1, 1/4) -- (1/4, 1) -- (0, 1) -- cycle;
    \end{tikzpicture}
    \caption{Illustration of $\conv(S^1)$.}\label{fig:conv(S1)}
    \end{figure}

    Now assume, for some $\ell\geq1$, that
    \[
        S^\ell(\b{c})\subseteq\conv(\mathcal V^\ell(\b{c}))
    \]
    for every parameter vector $0<c_1\leq\cdots\leq c_\ell<1$. Consider $(x,\b{y},z)\in S^{\ell+1}(\b{a})$, where $\b{y}\in[0,1]^\ell$. Then $(x,\b{y})\in S^\ell(\b{a}_{1:\ell})$. We distinguish three cases.

    \medskip
    \noindent\textbf{Case I: $z=1$.}
    In this case,
    \[
        (x,\b{y},z)\in C^{\ell+1}_{\ell+1} \subseteq \mathcal U^{\ell+1} \subseteq \conv(\mathcal V^{\ell+1}).
    \]

    \medskip
    \noindent\textbf{Case II: $0\leq z\leq a_{\ell+1}$.}
    By the induction hypothesis, $(x,\b{y})$ is a convex combination of points $(\xi,\b{\eta}) \in \mathcal V^\ell(\b{a}_{1:\ell})$. We show that each lifted point $(\xi,\b{\eta},z)$ belongs to
    $\conv(\mathcal V^{\ell+1})$.

    If $(\xi,\b{\eta}) \in D^\ell_{i,k}(\b{a}_{1:\ell})$ for some $i \in [\ell]$ and $k \in \{0,\ldots,i\}$, then
    \[
        (\xi,\b{\eta},z) = (1-z)(\xi,\b{\eta},0)+z(\xi,\b{\eta},1),
    \]
    and both points on the right belong to $D^{\ell+1}_{i,k}$. If $(\xi,\b{\eta}) \in E^\ell_0(\b{a}_{1:\ell})$, then $\xi=1$ and
    \[
        (1,\b{\eta},z) = \left(1-\frac{z}{a_{\ell+1}}\right)(1,\b{\eta},0) + \frac{z}{a_{\ell+1}}(1,\b{\eta},a_{\ell+1}),
    \]
    where both points on the right belong to $E^{\ell+1}_0$. Finally,
    \[
        (0,\b{0},z) = (1-z)(0,\b{0},0) + z(0,\b{0},1),
    \]
    and $(0,\b{0},1)\in D^{\ell+1}_{\ell+1,0}$. Therefore, $(x,\b{y},z)\in\conv(\mathcal V^{\ell+1})$.

    \medskip
    \noindent\textbf{Case III: $a_{\ell+1}<z<1$.}
    Set $\alpha := 1+a_{\ell+1}-z$. Then $a_{\ell+1} < \alpha < 1$, and
    \[
        x \leq \frac{a_{\ell+1}}{z} \leq \frac{a_{\ell+1}}{z} - \frac{(z-a_{\ell+1})(z-1)}{z} = 1+a_{\ell+1}-z = \alpha.
    \]
    Let
    \[
        x':=\frac{x}{\alpha}, \qquad \b{a}':=\frac{\b{a}_{1:\ell}}{\alpha}.
    \]
    Since $a_\ell \leq a_{\ell+1} <\alpha$, the vector $\b{a}'$ is admissible, and $(x',\b{y})\in S^\ell(\b{a}')$. By the induction hypothesis, $(x',\b{y})$ is a convex combination of points
    $(\xi,\b{\eta})\in\mathcal V^\ell(\b{a}')$. It is therefore enough to show that
    \[
        (\alpha\xi,\b{\eta},z) \in \conv(\mathcal V^{\ell+1}) \qquad \text{for every }(\xi,\b{\eta}) \in \mathcal V^\ell(\b{a}').
    \]

    If $(\xi,\b{\eta}) \in D^\ell_{i,k}(\b{a}')$, then, directly from the definition of $D^\ell_{i,k}$,
    \[
        (\alpha\xi,\b{\eta})\in D^\ell_{i,k}(\b{a}_{1:\ell}).
    \]
    Hence, the first lifting formula from Case~II, which only requires $0 \leq z \leq 1$, applies to $(\alpha\xi,\b{\eta},z)$. The same is true when $(\xi,\b{\eta})=(0,\b{0})$.

    It remains to consider $(\xi,\b{\eta})\in E^\ell_0(\b{a}')$. Then
    \[
        (\alpha\xi,\b{\eta},z) = \left(\alpha, \frac{1}{\alpha}\b{b}\circ\b{a}_{1:\ell},z\right)
    \]
    for some $\b{b}\in\{0,1\}^\ell$. Define
    \[
        \theta:=\frac{a_{\ell+1}}{\alpha z}, \qquad
        \lambda:=\frac{1-z}{1-a_{\ell+1}}, \qquad
        \mu:=\frac{z-a_{\ell+1}}{1-a_{\ell+1}}.
    \]
    The inequality $a_{\ell+1}/z \leq \alpha$ gives $0 < \theta \leq 1$, while $\lambda, \mu \geq 0$ and $\lambda + \mu = 1$. We have
    \[
        \left(\alpha, \frac{1}{\alpha}\b{b} \circ \b{a}_{1:\ell},z \right)
        =(1-\theta)(\alpha,\b{0},z) + \theta\left(\alpha, \frac{z}{a_{\ell+1}} \b{b} \circ \b{a}_{1:\ell}, z \right).
    \]
    Moreover,
    \[
        (\alpha,\b{0},z) = \lambda(1,\b{0},a_{\ell+1})+\mu(a_{\ell+1},\b{0},1)
    \]
    and
    \[
        \left(\alpha, \frac{z}{a_{\ell+1}} \b{b} \circ \b{a}_{1:\ell}, z\right)
        =\lambda(1,\b{b}\circ\b{a}_{1:\ell},a_{\ell+1}) + \mu\left(a_{\ell+1}, \b{b} \circ \frac{\b{a}_{1:\ell}}{a_{\ell+1}}, 1 \right).
    \]
    The points with first coordinate $1$ belong to $E^{\ell+1}_0$, and the points with first coordinate $a_{\ell+1}$ belong to $D^{\ell+1}_{\ell+1,\ell+1}$. Thus, every lifted point lies in
    $\conv(\mathcal V^{\ell+1})$, completing the induction.
\end{proof}

\begin{figure}[ht]
\centering
\tdplotsetmaincoords{60}{110}
\begin{tikzpicture}[scale=4,tdplot_main_coords]
\begin{scope}
  \coordinate (O) at (0,0,0);
  \draw[-stealth] (0,0,0) -- (1.2,0,0) node[left]{\small$x$};
  \draw[-stealth] (0,0,0) -- (0,1.2,0) node[right]{\small$y_1$};
  \draw[-stealth] (0,0,0) -- (0,0,1.2) node[above]{\small$y_2$};
  \draw (0, 0, 0) -- (1, 0, 0) -- (1, 0, 1) -- (0, 0, 1);
  \draw (0, 0, 1) -- (0, 1, 1) -- (0, 1, 0) -- (1, 1, 0) -- (1, 0, 0);
  \draw (1, 0, 1) -- (1, 1, 1) -- (1, 1, 0) -- (1, 1, 1) -- (0, 1, 1);
  \draw[thick] (1, 1/4, 0) -- (1, 0, 0) -- (1, 0, 1/2);
  \draw[thick] (0, 0, 1) -- (1/2, 0, 1);
  \draw[thick] (0, 0, 1) -- (0, 1, 1) -- (1/4, 1, 1) -- (0, 1, 1) -- (0, 1, 0) -- (1/4, 1, 0);
  \draw[thick] plot[tension=1] (1, 0, 1/2) -- (7/8, 0, 4/7) -- (3/4, 0, 2/3) -- (1/2, 0, 1) -- (1/2, 1/2, 1) -- (0.5,  0.5, 1) -- (0.5333, 0.468779, 0.937559) -- (0.5667, 0.441151, 0.882301) -- (0.6, 0.416667, 0.833333) -- (0.6333, 0.394758, 0.789515) -- (0.6667, 0.374981, 0.749963) -- (0.7, 0.357143, 0.714286) -- (0.7333, 0.340925, 0.681849) -- (0.7667, 0.326073, 0.652146) -- (0.8, 0.3125, 0.625) -- (0.8333, 0.300012, 0.600024) -- (0.8667, 0.28845, 0.576901) -- (0.9, 0.277778, 0.555556) -- (0.9333, 0.267867, 0.535733) -- (0.9667, 0.258612, 0.517224) -- (1, 0.25, 0.5) -- (1, 0, 0.5) -- (1, 0.25, 0.5) -- (1, 0.25, 0);
  \draw[thick] plot[tension=1] (1, 0.25, 0) -- (0.9375,0.266666667, 0) -- (0.8759, 0.28542071, 0) -- (0.8125, 0.307692308, 0) -- (0.75,	0.333333333, 0) -- (0.6875, 0.363636364, 0) -- (0.625, 0.4, 0) -- (0.5625, 0.444444444, 0) -- (0.5, 0.5, 0) -- (0.4375, 0.571428571, 0) -- (0.375, 0.666666667, 0) -- (0.3125, 0.8, 0)--(0.25, 1, 0) -- (0.25, 1, 1);
  \draw[thick] plot[tension=1] (0.5, 0.5, 1)--(0.4936, 0.506482982, 1)--(0.4872, 0.513136289, 1)--(0.4808, 0.519966722, 1)--(0.4744, 0.52698145, 1)--(0.4679, 0.534302201, 1)--(0.4615, 0.541711809, 1)--(0.4551, 0.549329818, 1)--(0.4487, 0.557165144, 1)--(0.4423, 0.565227221, 1)--(0.4359, 0.573526038, 1)--(0.4295, 0.582072177, 1)--(0.4231, 0.590876861, 1)--(0.4167, 0.599952004, 1)--(0.4103, 0.609310261, 1)--(0.4038, 0.619118375, 1)--(0.3974, 0.629089079, 1)--(0.391, 0.639386189, 1)--(0.3846, 0.650026001, 1)--(0.3782, 0.661025912, 1)--(0.3718, 0.672404519, 1)--(0.3654, 0.684181719, 1)--(0.359, 0.69637883, 1)--(0.3526, 0.709018718, 1)--(0.3462, 0.722125939, 1)--(0.3397, 0.73594348, 1)--(0.3333, 0.750075008, 1)--(0.3269, 0.764759865, 1)--(0.3205, 0.780031201, 1)--(0.3141, 0.795924865, 1)--(0.3077, 0.812479688, 1)--(0.3013, 0.829737803, 1)--(0.2949, 0.847744998, 1)--(0.2885, 0.866551127, 1)--(0.2821, 0.886210564, 1)--(0.2756, 0.907111756, 1)--(0.2692, 0.928677563, 1)--(0.2628, 0.95129376, 1)--(0.2564, 0.975039002, 1)--(0.25, 1, 1);
  \draw[thick, color=blue] (1/2, 0, 1) -- (1/2, 1/2, 1) -- (1, 1/4, 1/2) -- (1, 0, 1/2) -- (1/2, 0, 1);
  \draw[thick, color=blue] (1, 1/4, 1/2) -- (1, 1/4, 0) -- (1/4, 1, 0) -- (1/4, 1, 1) -- (1, 1/4, 1/2);
  \draw[thick, color=blue] (1, 1/4, 1/2) -- (1/4, 1, 1) -- (1/2, 1/2, 1) -- (1, 1/4, 1/2);
  \draw[thick, color=blue] (0, 0, 1) -- (0, 1, 1) -- (1/4, 1, 1) -- (1/2, 1/2, 1) -- (1/2, 0, 1) -- (0, 0, 1);
  \draw[thick, color=blue] (1, 0, 1/2) -- (1, 1/4, 1/2) -- (1, 1/4, 0) -- (1, 0, 0) -- (1, 0, 1/2);
  \draw[thick, color=blue] (1/4, 1, 1) -- (0, 1, 1) -- (0, 1, 0) -- (1/4, 1, 0) -- (1/4, 1, 1);
  \filldraw[very thick, color=cyan, opacity=0.3] (0, 0, 1/2) -- (0, 1, 1/2) -- (1/4, 1, 1/2) -- (0.265306122, 0.942307692, 0.50) -- (0.280612245, 0.890909091, 0.50) -- (0.295918367, 0.844827586, 0.50) -- (0.31122449, 0.803278689, 0.50) -- (0.326530612, 0.765625, 0.50) -- (0.341836735, 0.731343284, 0.50) -- (0.357142857, 0.7, 0.50) -- (0.37244898, 0.671232877, 0.50) -- (0.387755102, 0.644736842, 0.50) -- (0.403061224, 0.620253165, 0.50) -- (0.418367347, 0.597560976, 0.50) -- (0.433673469, 0.576470588, 0.50) -- (0.448979592, 0.556818182, 0.50) -- (0.464285714, 0.538461538, 0.50) -- (0.479591837, 0.521276596, 0.50) -- (0.494897959, 0.505154639, 0.50) -- (0.510204082, 0.49, 0.50) -- (0.525510204, 0.475728155, 0.50) -- (0.540816327, 0.462264151, 0.50) -- (0.556122449, 0.449541284, 0.50) -- (0.571428571, 0.4375, 0.50) -- (0.586734694, 0.426086957, 0.50) -- (0.602040816, 0.415254237, 0.50) -- (0.617346939, 0.404958678, 0.50) -- (0.632653061, 0.39516129, 0.50) -- (0.647959184, 0.385826772, 0.50) -- (0.663265306, 0.376923077, 0.50) -- (0.678571429, 0.368421053, 0.50) -- (0.693877551, 0.360294118, 0.50) -- (0.709183673, 0.352517986, 0.50) -- (0.724489796, 0.345070423, 0.50) -- (0.739795918, 0.337931034, 0.50) -- (0.755102041, 0.331081081, 0.50) -- (0.770408163, 0.324503311, 0.50) -- (0.785714286, 0.318181818, 0.50) -- (0.801020408, 0.312101911, 0.50) -- (0.816326531, 0.30625, 0.50) -- (0.831632653, 0.300613497, 0.50) -- (0.846938776, 0.295180723, 0.50) -- (0.862244898, 0.289940828, 0.50) -- (0.87755102, 0.284883721, 0.50) -- (0.892857143, 0.28, 0.50) -- (0.908163265, 0.275280899, 0.50) -- (0.923469388, 0.270718232, 0.50) -- (0.93877551, 0.266304348, 0.50) -- (0.954081633, 0.262032086, 0.50) -- (0.969387755, 0.257894737, 0.50) -- (0.984693878, 0.25388601, 0.50) -- (1, 1/4, 1/2) -- (1, 0, 1/2) -- (0, 0, 1/2);
  \filldraw[very thick, color=orange, opacity=0.3] (0, 0, 3/4) -- (0.000 , 1.000 , 3/4) -- (1/4, 1, 3/4) -- 
( 0.260204082 , 0.960784314 , 0.75 ) --
( 0.270408163 , 0.924528302 , 0.75 ) --
( 0.280612245 , 0.890909091 , 0.75 ) --
( 0.290816327 , 0.859649123 , 0.75 ) --
( 0.301020408 , 0.830508475 , 0.75 ) --
( 0.31122449 , 0.803278689 , 0.75 ) --
( 0.321428571 , 0.777777778 , 0.75 ) --
( 0.331632653 , 0.753846154 , 0.75 ) --
( 0.341836735 , 0.731343284 , 0.75 ) --
( 0.352040816 , 0.710144928 , 0.75 ) --
( 0.362244898 , 0.690140845 , 0.75 ) --
( 0.37244898 , 0.671232877 , 0.75 ) --
( 0.382653061 , 0.653333333 , 0.75 ) --
( 0.392857143 , 0.636363636 , 0.75 ) --
( 0.403061224 , 0.620253165 , 0.75 ) --
( 0.413265306 , 0.604938272 , 0.75 ) --
( 0.423469388 , 0.590361446 , 0.75 ) --
( 0.433673469 , 0.576470588 , 0.75 ) --
( 0.443877551 , 0.563218391 , 0.75 ) --
( 0.454081633 , 0.550561798 , 0.75 ) --
( 0.464285714 , 0.538461538 , 0.75 ) --
( 0.474489796 , 0.52688172 , 0.75 ) --
( 0.484693878 , 0.515789474 , 0.75 ) --
( 0.494897959 , 0.505154639 , 0.75 ) --
( 0.505102041 , 0.494949495 , 0.75 ) --
( 0.515306122 , 0.485148515 , 0.75 ) --
( 0.525510204 , 0.475728155 , 0.75 ) --
( 0.535714286 , 0.466666667 , 0.75 ) --
( 0.545918367 , 0.457943925 , 0.75 ) --
( 0.556122449 , 0.449541284 , 0.75 ) --
( 0.566326531 , 0.441441441 , 0.75 ) --
( 0.576530612 , 0.433628319 , 0.75 ) --
( 0.586734694 , 0.426086957 , 0.75 ) --
( 0.596938776 , 0.418803419 , 0.75 ) --
( 0.607142857 , 0.411764706 , 0.75 ) --
( 0.617346939 , 0.404958678 , 0.75 ) --
( 0.62755102 , 0.398373984 , 0.75 ) --
( 0.637755102 , 0.392 , 0.75 ) --
( 0.647959184 , 0.385826772 , 0.75 ) --
( 0.658163265 , 0.379844961 , 0.75 ) --
( 0.668367347 , 0.374045802 , 0.75 ) --
( 0.678571429 , 0.368421053 , 0.75 ) --
( 0.68877551 , 0.362962963 , 0.75 ) --
( 0.698979592 , 0.357664234 , 0.75 ) --
( 0.709183673 , 0.352517986 , 0.75 ) --
( 0.719387755 , 0.34751773 , 0.75 ) --
( 0.729591837 , 0.342657343 , 0.75 ) --
( 0.739795918 , 0.337931034 , 0.75 ) --
( 0.75 , 0.333333333 , 0.75 ) -- (0.75, 0, 0.75) -- (0, 0, 0.75);
\fill[red] ( 0.75 , 0.333333333 , 0.75 ) circle(.02);
  \fill[red] (0.75, 0, 0.75) circle(.02);
  \fill[red] (0.75, 3/8, 0.75) circle(.02);
  \draw[dashed] (0.75, 3/8, 0.75) -- ( 0.75 , 0.333333333 , 0.75 ) -- (0.75, 0, 0.75);
  \end{scope}
  \end{tikzpicture}
  \caption{Illustration of Case~III in the proof of Proposition~\ref{prop:extpts-union} for $n=2$.}\label{fig:caseIII}
  \end{figure}
  
In Fig.~\ref{fig:caseIII}, we illustrate the geometric idea of Case III in the proof of Proposition~\ref{prop:extpts-union} for \(n=2\). The cyan slice corresponds to \(y_2=a_2\), which is the threshold separating Cases II and III. The orange slice corresponds to a representative value \(z\in(a_2,1)\) and depicts the set
\[
    \{(x,y_1,z)\mid (x/\alpha,y_1)\in S^1(a_1/\alpha)\}.
\]
The three red points demonstrate the first convex combination in the $E^\ell_0$ case: the point
$(\alpha,a_1/\alpha,z)$ lies on the segment joining $(\alpha,0,z)$ and $(\alpha,a_1z/a_2,z)$.

We summarize the conclusion of this section in the following theorem.

\begin{theorem}\label{thm:extpts}
    The set of extreme points of $\conv(S^n)$ is
    \begin{align*}
        \mathcal V^n &= \left\{(0,\b{b}) \, \middle| \, \b{b}\in\{0,1\}^n\right\}
        \cup \left(\bigcup_{k=1}^n D^n_{k,k}\right) \cup E^n_0 \\
        &= \left\{(0,\b{b}) \, \middle| \, \b{b} \in \{0,1\}^n \right\} 
        \cup \left( \bigcup_{k=1}^n \left \{ \left(a_k,\b{b} \circ \b{\gamma}^{\,k}\right) \, \middle| \, \b{b} \in \{0,1\}^n, \ b_k=1 \right\} \right) \\
        & \qquad \cup \left \{(1, \b{b} \circ \b{a}) \, \middle| \, \b{b} \in \{0,1\}^n \right\}.
    \end{align*}
    If $a_1,\ldots,a_n$ are pairwise distinct, then $\conv(S^n)$ has $(n+4)2^{n-1}$ extreme points.
\end{theorem}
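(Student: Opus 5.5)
We prove three things in turn: that the set $\mathcal V^n$ simplifies to the displayed form, that every point of it is an extreme point of $\conv(S^n)$, and the count of those points.

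\textbf{Simplifying $\mathcal V^n$.} By Proposition~\ref{prop:extpts-union}, $\conv(S^n)=\conv(\mathcal V^n)$ with $\mathcal V^n$ finite, so the extreme points form a subset of $\mathcal V^n$. Proposition~\ref{prop:extpts} describes $\mathcal V^n$ as $E^n_0\cup\{\b 0\}\cup\bigcup_{i}\bigcup_{k=0}^i D^n_{i,k}$.
\begin{itemize}
\item The union $\bigcup_{i=1}^n D^n_{i,0}$ together with $\b 0$ is exactly $\{(0,\b b)\mid \b b\in\{0,1\}^n\}$.
\item For $k\in[i]$, every point of $D^n_{i,k}$ has $b_k=1$, so it lies in $D^n_{k,k}$. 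Hence $\bigcup_{i\ge k}D^n_{i,k}=D^n_{k,k}$.
\end{itemize}
This gives the two displayed equalities for $\mathcal V^n$.

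\textbf{Every point of $\mathcal V^n$ is extreme.} Since $\conv(S^n)$ is the convex hull of the compact set $S^n$, each of its extreme points lies in $S^n$. It therefore suffices to show that each $v\in\mathcal V^n$ is not a proper convex combination of points of $S^n$. The argument copies the proof of Lemma~\ref{lem:Dk}.
\begin{itemize}
\item Points $(0,\b b)$ and $(1,\b b\circ\b a)$ with $\b b$ binary: for $(0,\b b)$, every coordinate is $0$ or $1$, so it is a vertex of $[0,1]^{n+1}$ and hence extreme. For $(1,\b b\circ\b a)$, the combined points must all have $x=1$, since $x=1$ is the maximum. Then $y_j\in[0,a_j]$, and each $\bar y_j$ is an endpoint of this interval, which forces equality.
\item Points $(a_k,\b b\circ\b\gamma^{\,k})$ with $b_k=1$: each combined point has $y_k=1$ because $\bar y_k=1$ is maximal. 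Feasibility then gives $x\le a_k$, so $x=a_k$ for all of them. The endpoint argument of Lemma~\ref{lem:Dk} applies verbatim, now with points in $S^n$ instead of $C^n_i$.
\end{itemize}
Thus every element of $\mathcal V^n$ is extreme.

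\textbf{Counting.} When the $a_j$ are pairwise distinct, the $x$-coordinates $0,a_1,\dots,a_n,1$ are distinct, so the families listed are pairwise disjoint. Within each family, distinct $\b b$ give distinct points, because $a_j>0$ and $a_j/a_k>0$. The family at $x=0$ has $2^n$ points, each $D^n_{k,k}$ has $2^{n-1}$ points, and $E^n_0$ has $2^n$ points. The total is
\[
2^n+n2^{n-1}+2^n=(n+4)2^{n-1}.
\]

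The main point requiring care is extremality in all of $\conv(S^n)$, as opposed to in a face $\conv(C^n_i)$. This is handled by the observation that the maximality of $x$ or of $y_k$ forces every combined point into the relevant face.
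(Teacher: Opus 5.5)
Your proposal is correct and follows the paper's proof essentially step for step: the same simplification of $\mathcal V^n$, the same finiteness argument, and the same count. The only difference is that you re-run the endpoint argument of Lemma~\ref{lem:Dk} directly over $S^n$, whereas the paper notes that $\conv(C^n_0)=\conv(S^n)\cap\{x=1\}$ and $\conv(C^n_i)=\conv(S^n)\cap\{y_i=1\}$ are exposed faces and inherits extremality from Proposition~\ref{prop:extpts}; both versions rest on the same observation that a maximal coordinate forces every combined point into the face.

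One small wording fix: the reduction to convex combinations of points of $S^n$ comes from writing the two endpoints of a segment in $\conv(S^n)$ as finite convex combinations of points of $S^n$ (as in Lemma~\ref{lem:Dk}). It does not follow from the fact that extreme points of $\conv(S^n)$ lie in $S^n$, which is how your "therefore" currently reads.
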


\begin{proof}
    We first verify that the displayed sets agree with the earlier definition 
    \[
        \mathcal V^n = E^n_0 \cup \left(\bigcup_{i=1}^n\bigcup_{k=0}^iD^n_{i,k} \right) \cup\{\b{0}\}.
    \]
    Since every nonzero binary vector has at least one component equal to one,
    \[
        \{\b{0}\} \cup \left(\bigcup_{i=1}^n D^n_{i,0} \right)
        =\left\{(0,\b{b}) \, \middle| \, \b{b}\in\{0,1\}^n\right\}.
    \]
    Moreover, if $1\leq k\leq i\leq n$, then $D^n_{i,k}\subseteq D^n_{k,k}$, while $D^n_{k,k}$ itself occurs in the union when $i=k$. Hence
    \[
        \bigcup_{i=1}^n\bigcup_{k=1}^i D^n_{i,k}
        =\bigcup_{k=1}^n D^n_{k,k}.
    \]
    These identities give the first displayed representation of $\mathcal V^n$. The second follows directly from the definitions of $D^n_{k,k}$ and $E^n_0$.

    Proposition~\ref{prop:extpts-union} gives $\conv(S^n)=\conv(\mathcal V^n)$. Since $\mathcal V^n$ is
    finite, every extreme point of $\conv(S^n)$ belongs to $\mathcal V^n$.
    For the reverse inclusion, observe that
    \[
        \conv(C^n_0)=\conv(S^n)\cap\{(x,\b{y}) \mid x=1 \}
    \]
    and, for each $i\in[n]$,
    \[
        \conv(C^n_i)=\conv(S^n) \cap \{(x,\b{y}) \mid y_i=1\}.
    \]
    Indeed, if a convex combination of points in $S^n$ has $x=1$ or $y_i=1$, then every positive-weight point in that combination has the same coordinate equal to $1$. Hence, for each $i \in \{0, \ldots, n\}$, $\conv(C^n_i)$ is an exposed face of $\conv(S^n)$, and every point of $E^n_i$ is an extreme point of $\conv(S^n)$. The origin is also an extreme point of $\conv(S^n)$, because $\conv(S^n) \subseteq \Re^{n+1}_+$. Recalling that
    \[
        \mathcal V^n = \{\b{0}\} \cup \left(\bigcup_{i=0}^nE^n_i \right),
    \]
    we conclude that every point of $\mathcal V^n$ is an extreme point of $\conv(S^n)$.

    Finally, suppose that $a_1,\ldots,a_n$ are pairwise distinct. Since $0<a_k<1$ for every $k \in [n]$, the $n+2$ first-coordinate values $0, a_1, \ldots, a_n, 1$ are pairwise distinct. Hence the corresponding classes in the displayed description of $\mathcal V^n$ are pairwise disjoint. The classes with first coordinates $0$ and $1$ each contain $2^n$ points, while, for each $k\in[n]$, the class with first coordinate $a_k$ contains $2^{n-1}$ points. Therefore,
    \[
        \left|\mathcal V^n\right| = 2^n+n2^{n-1}+2^n = (n+4)2^{n-1}.
    \]
\end{proof}

\section{The facet description of $\conv(S^n)$}\label{sec:facets}

In this section, we derive a complete facet description of $\conv(S^n)$. We first establish a lifting result that transfers valid and facet-defining inequalities between $\conv(S^n)$ and the lower-dimensional sets obtained by restricting the index set. We then introduce the \emph{leap inequalities} and prove that they define facets. Finally, we show that every facet of $\conv(S^n)$ is either a variable bound or a leap inequality.

\subsection{A lifting theorem}
Let $Q := \{i_1, \cdots, i_q\}$ be a nonempty subset of $[n]$ with $i_1 < \cdots < i_q$. For $\b{y} \in \Re^n$, define
\[
    \b{y}^Q := (y_{i_1},\ldots,y_{i_q}) \in \Re^q.
\]
For $\b{u} \in \Re^q$, let $\iota_Q(\b{u}) \in \Re^n$ denote its zero extension to the coordinates in $Q$, that is,
\[
    \bigl( \iota_Q(\b{u}) \bigr)_j :=
    \begin{cases}
        u_k, & j=i_k\text{ for some }k\in[q],\\
        0,   & j \notin Q.
    \end{cases}
\]
Thus, for every $\b{c}\in\Re^q$ and $\b{y}\in\Re^n$,
\begin{equation}\label{eqn:cyQ}
    \iota_Q(\b{c})^\intercal\b{y}
    =\b{c}^\intercal\b{y}^Q.
\end{equation}

The next lemma records the required projection and extension properties.

\begin{lemma}\label{lem:yQ}
    Let $Q := \{i_1, \cdots, i_q\}$ be a nonempty subset of $[n]$ with $i_1 < \cdots < i_q$. 
    \begin{enumerate}
        \item If $(x,\b{y}) \in S^n$, then $(x,\b{y}^Q) \in S^q(\b{a}^Q)$. Conversely, if $(x,\b{u}) \in S^q(\b{a}^Q)$, then $(x,\iota_Q(\b{u})) \in S^n$ and, for every $j \notin Q$,
        \[
            (x,\iota_Q(\b{u})+a_j\b{e}_j)\in S^n.
        \]
        \item The same statements hold with $S^n$ and $S^q(\b{a}^Q)$ replaced by their convex hulls. In particular,
        \[
            \left\{(x,\b{y}^Q) \, \middle| \, (x,\b{y}) \in \conv(S^n)\right\} = \conv(S^q(\b{a}^Q)).
        \]
    \end{enumerate}
\end{lemma}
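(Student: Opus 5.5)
Part 1 follows directly from the definitions, and part 2 then follows by linearity, since the projection and extension maps are affine.

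For part 1, if $(x,\b{y})\in S^n$, the box bounds and the inequalities $xy_{i_k}\leq a_{i_k}$ for $k\in[q]$ are a subset of the constraints defining $S^n$. The vector $\b{a}^Q$ is admissible: its entries lie in $(0,1)$ and are nondecreasing because $i_1<\cdots<i_q$. Hence $(x,\b{y}^Q)\in S^q(\b{a}^Q)$. Conversely, let $(x,\b{u})\in S^q(\b{a}^Q)$. The point $(x,\iota_Q(\b{u}))$ lies in the box. For $j\in Q$ its bilinear constraints are exactly those of $(x,\b{u})$. For $j\notin Q$ we have $xy_j=0\leq a_j$. Adding $a_j\b{e}_j$ for some $j\notin Q$ keeps $y_j=a_j\in[0,1]$ and gives $xy_j=xa_j\leq a_j$, because $x\leq1$.

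For part 2, write $\pi(x,\b{y}):=(x,\b{y}^Q)$. Then $\pi$, $(x,\b{u})\mapsto(x,\iota_Q(\b{u}))$, and $(x,\b{u})\mapsto(x,\iota_Q(\b{u})+a_j\b{e}_j)$ are all affine maps. Affine maps commute with convex combinations, so $\phi(\conv(A))=\conv(\phi(A))$ for any affine $\phi$ and any set $A$. Part 1 says $\pi(S^n)\subseteq S^q(\b{a}^Q)$, and therefore $\pi(\conv(S^n))=\conv(\pi(S^n))\subseteq\conv(S^q(\b{a}^Q))$. The two extension maps are handled the same way and send $\conv(S^q(\b{a}^Q))$ into $\conv(S^n)$.

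For the displayed equality, the inclusion $\subseteq$ is the projection statement just shown. For $\supseteq$, take any $(x,\b{u})\in\conv(S^q(\b{a}^Q))$. Its zero extension $(x,\iota_Q(\b{u}))$ lies in $\conv(S^n)$, and $\pi$ maps it back to $(x,\b{u})$, since $\iota_Q(\b{u})^Q=\b{u}$. No step is a real obstacle. The only point needing care is stating the commutation of affine maps with $\conv$, and, in the converse of part 1, using $x\leq1$ for the extension by $a_j\b{e}_j$.
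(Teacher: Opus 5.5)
Your proof is correct and takes essentially the same approach as the paper. Part~1 is checked directly from the defining inequalities, using $x\leq1$ for the extension by $a_j\b{e}_j$; Part~2 comes from pushing finite convex combinations through the projection and the two extension maps, which the paper does term by term and you state as the fact that affine maps commute with $\conv$.
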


\begin{proof}
    Part~1 follows directly from the defining inequalities. In particular, if $(x,\b{u})\in S^q(\b{a}^Q)$ and $j\notin Q$, then $x a_j\leq a_j$ because $0 \leq x \leq 1$.

    For Part~2, projection preserves convex combinations. Hence, the projection of $\conv(S^n)$ is contained in $\conv(S^q(\b{a}^Q))$. Conversely, write $(x,\b{u})\in\conv(S^q(\b{a}^Q))$ as a finite convex combination of points in $S^q(\b{a}^Q)$ and apply Part~1 term by term. The same convex combination gives $(x,\iota_Q(\b{u}))\in\conv(S^n)$ and, for every $j\notin Q$, $(x,\iota_Q(\b{u})+a_j\b{e}_j) \in \conv(S^n)$.
\end{proof}

We can now state the lifting result.

\begin{theorem}\label{thm:lifting}
    Let $Q := \{i_1, \cdots, i_q\}$ be a nonempty subset of $[n]$ with $i_1 < \cdots < i_q$, and let $c_0, d \in \Re$ and $\b{c} \in \Re^q$. Consider the inequalities
    \begin{align}
        c_0x+\iota_Q(\b{c})^\intercal\b{y} & \leq d, \label{eqn:lift_high} \\
        c_0x+\b{c}^\intercal\b{u} & \leq d. \label{eqn:lift_low}
    \end{align}
    Then:
    \begin{enumerate}
        \item Inequality \eqref{eqn:lift_high} is valid for $\conv(S^n)$ if and only if \eqref{eqn:lift_low} is valid for $\conv(S^q(\b{a}^Q))$.
        \item Inequality \eqref{eqn:lift_high} defines a facet for $\conv(S^n)$ if and only if \eqref{eqn:lift_low} defines a facet for $\conv(S^q(\b{a}^Q))$.
    \end{enumerate}
\end{theorem}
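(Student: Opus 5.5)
The plan is to use Lemma~\ref{lem:yQ} together with the identity \eqref{eqn:cyQ}, and to rely on the fact that both $\conv(S^n)$ and $\conv(S^q(\b{a}^Q))$ are full-dimensional. Their dimensions are $n+1$ and $q+1$, since $S^q(\b{a}^Q)$ is itself a set of the form \eqref{eqn:S} with admissible parameters.

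\textbf{Validity.} By \eqref{eqn:cyQ}, the left-hand side of \eqref{eqn:lift_high} at $(x,\b{y})$ equals the left-hand side of \eqref{eqn:lift_low} at $(x,\b{y}^Q)$. Part~2 of Lemma~\ref{lem:yQ} says that the map $(x,\b{y})\mapsto(x,\b{y}^Q)$ sends $\conv(S^n)$ onto $\conv(S^q(\b{a}^Q))$. Hence the maximum of the left-hand side over each set is the same number. So \eqref{eqn:lift_high} is valid for $\conv(S^n)$ if and only if \eqref{eqn:lift_low} is valid for $\conv(S^q(\b{a}^Q))$. The same correspondence shows that points tight for one inequality map to points tight for the other.

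\textbf{Facets.} I would use the characterization that, for a full-dimensional polytope, a valid inequality with nonzero coefficient vector defines a facet iff it has as many affinely independent tight points as the ambient dimension. Note that $(c_0,\iota_Q(\b{c}))\neq\b{0}$ if and only if $(c_0,\b{c})\neq\b{0}$, so nontriviality transfers in both directions. Validity transfers by Part~1 of the theorem.

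For ($\Leftarrow$), suppose \eqref{eqn:lift_low} is facet-defining, and take affinely independent tight points $(x^t,\b{u}^t)$, $t=0,\ldots,q$, in $\conv(S^q(\b{a}^Q))$.
\begin{itemize}
\item Their lifts $(x^t,\iota_Q(\b{u}^t))$ lie in $\conv(S^n)$ by Lemma~\ref{lem:yQ} and are tight by \eqref{eqn:cyQ}.
\item For each $j\notin Q$, add the point $(x^0,\iota_Q(\b{u}^0)+a_j\b{e}_j)$. It lies in $\conv(S^n)$ by Lemma~\ref{lem:yQ}, and it is tight because the coefficient of $y_j$ in \eqref{eqn:lift_high} is zero.
\end{itemize}
This gives $n+1$ tight points. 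For affine independence, use the base point $(x^0,\iota_Q(\b{u}^0))$ and look at the difference vectors. The differences from the first group are supported on the coordinates $\{x\}\cup Q$ and are linearly independent there. The remaining differences are $a_j\b{e}_j$ with $a_j>0$ and $j\notin Q$. These are supported on coordinates disjoint from the first group and are independent of one another. So all $n$ differences are linearly independent.

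For ($\Rightarrow$), take $n+1$ affinely independent tight points of $\conv(S^n)$. Their projections $(x,\b{y}^Q)$ are tight points of $\conv(S^q(\b{a}^Q))$. The projection is a linear map whose kernel has dimension $n-q$. It therefore sends an affine set of dimension $n$ to an affine set of dimension at least $n-(n-q)=q$. Hence among the projected points there are $q+1$ that are affinely independent, and \eqref{eqn:lift_low} is facet-defining.

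The main point requiring care is the affine-independence bookkeeping in the ($\Leftarrow$) direction. The choice of a single common base point, combined with the disjoint coordinate supports of the two groups of difference vectors, resolves it. Apart from that, one must remember to exclude the trivial inequality $0\leq 0$, which is handled by the nonzero-coefficient remark above.
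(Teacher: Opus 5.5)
Your proposal is correct and follows essentially the same route as the paper. Validity comes from Lemma~\ref{lem:yQ} and identity~\eqref{eqn:cyQ}. The ($\Leftarrow$) direction zero-extends $q+1$ affinely independent tight points and adds the points $\widehat{\b{p}}^{\,0}+(0,a_j\b{e}_j)$ for $j\notin Q$. The ($\Rightarrow$) direction is a kernel-dimension count for the projection $\pi_Q$. Your explicit check that the coefficient vector is nonzero fills in what the paper leaves implicit when it calls $F_Q$ a proper face.
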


\begin{proof}
    The validity equivalence follows immediately from Lemma~\ref{lem:yQ} and identity~\eqref{eqn:cyQ}.

    Let $P_n:=\conv(S^n)$ and $P_Q:=\conv(S^q(\b{a}^Q))$, and let $F_n$ and $F_Q$ denote the faces on which \eqref{eqn:lift_high} and \eqref{eqn:lift_low}, respectively, hold at equality.
    Suppose first that \eqref{eqn:lift_high} defines a facet of $P_n$. The projection
    \[
        \pi_Q(x,\b{y}):=(x,\b{y}^Q)
    \]
    maps $F_n$ onto $F_Q$. Since $\dim(F_n)=n$ and $\ker(\pi_Q)$ has dimension $n-q$,
    \[
        \dim(F_Q)\geq\dim(\pi_Q(F_n))\geq n-(n-q)=q.
    \]
    As $F_Q$ is a proper face of the full-dimensional polytope $P_Q\subseteq\Re^{q+1}$, its dimension is at most $q$. Hence $\dim(F_Q)=q$, and \eqref{eqn:lift_low} defines a facet of $P_Q$.

    Conversely, suppose that \eqref{eqn:lift_low} defines a facet of $P_Q$. Since $P_Q$ is full-dimensional in $\Re^{q+1}$, there exist $q+1$ affinely independent points
    \[
        \b{p}^r=(x^r,\b{u}^r)\in F_Q, \qquad r=0,\ldots,q.
    \]
    By Lemma \ref{lem:yQ} and equation \eqref{eqn:cyQ}, their zero extensions
    \[
    \widehat{\b{p}}^{\,r} := \left(x^r,\iota_Q(\b{u}^r)\right), \qquad r=0,\ldots,q,
    \]
    belong to $F_n$. Moreover, the $n-q$ points
    \[
        \b{r}^j := \widehat{\b{p}}^{\,0} + (0,a_j\b{e}_j), \qquad j \in [n]\setminus Q
    \]
    also belong to $F_n$ by Lemma \ref{lem:yQ} and the fact that \eqref{eqn:lift_high} has zero coefficients on the coordinates outside $Q$.

    Thus, $F_n$ contains the $n+1$ points
    \[
        \widehat{\b{p}}^{\,0}, \ldots, \widehat{\b{p}}^{\,q}, \quad \b{r}^j\ \ (j\in[n]\setminus Q),
    \]
    which are affinely independent. Indeed, relative to $\widehat{\b{p}}^{\,0}$, the first $q$ differences are linearly independent and supported only on the $x$-coordinate and the coordinates in $Q$, while the remaining $n-q$ differences are $(0,a_j\b{e}_j)$, $j \notin Q$, and are supported on distinct coordinates outside $Q$. Therefore, $F_n$ is an $n$-dimensional proper face of the full-dimensional polytope $P_n \subseteq \Re^{n+1}$, and it follows that \eqref{eqn:lift_high} defines a facet of $P_n$.
\end{proof}

For $n=1$, the vertex description in Section~\ref{sec:extpts} gives
\[
    \conv(S^1(a)) = \left\{(x,y)\in\Re^2 \, \middle| \, 0 \leq x \leq1, 0 \leq y \leq1, x+y \leq 1+a
    \right\}.
\]
Each of these five inequalities defines a facet. Applying Theorem~\ref{thm:lifting} with $Q=\{i\}$ yields the following corollary.

\begin{corollary}\label{cor:lifting}
    Each inequality in the following systems is facet-defining for $\conv(S^n)$:
    \begin{align}
        &0\leq x\leq1, \label{eqn:bdx}\\
        &0\leq y_i\leq1, &&i\in[n], \label{eqn:bdy}\\
        &x+y_i\leq1+a_i, &&i\in[n]. \label{eqn:Mc2}
    \end{align}
\end{corollary}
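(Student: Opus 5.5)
The plan is to reduce everything to the case $n=1$ and then lift with Theorem~\ref{thm:lifting}. The only inequalities that do not involve any $y_j$ are \eqref{eqn:bdx}. For those we would use a separate one-line argument, because Theorem~\ref{thm:lifting} requires $Q$ to be nonempty. We can instead treat $0\le x\le 1$ as the case $Q=\{i\}$ for an arbitrary $i$, with the coefficient of $u$ equal to zero. That choice is permitted, since Theorem~\ref{thm:lifting} allows an arbitrary $\b{c}\in\Re^q$, including $\b{c}=0$.

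First, we confirm the description of $\conv(S^1(a))$ and that its five inequalities are facets. By Theorem~\ref{thm:extpts} with $n=1$, the extreme points are
\[
(0,0),\quad (0,1),\quad (a,1),\quad (1,a),\quad (1,0).
\]
These form a pentagon in $\Re^2$. Consecutive vertices give the edges:
\begin{itemize}
\item $x=0$ through $(0,0)$ and $(0,1)$;
\item $y=1$ through $(0,1)$ and $(a,1)$;
\item $x+y=1+a$ through $(a,1)$ and $(1,a)$;
\item $x=1$ through $(1,a)$ and $(1,0)$;
\item $y=0$ through $(1,0)$ and $(0,0)$.
\end{itemize}
Each of these inequalities is valid at all five vertices, which is a direct check using $0<a<1$. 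Each also holds with equality at two distinct vertices, so it defines a one-dimensional face, i.e.\ a facet of the full-dimensional polygon. Because $0<a<1$, the five vertices are distinct, so no edge degenerates.

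Second, we fix $i\in[n]$ and apply Theorem~\ref{thm:lifting}, Part~2, with $Q=\{i\}$, $q=1$ and $\b{a}^Q=(a_i)$. Then $\iota_Q(c)^\intercal\b{y}=c\,y_i$, and the five facets of $\conv(S^1(a_i))$ lift to the following facets of $\conv(S^n)$:
\begin{itemize}
\item $(c_0,c,d)=(-1,0,0)$ gives $x\ge 0$;
\item $(c_0,c,d)=(1,0,1)$ gives $x\le 1$;
\item $(c_0,c,d)=(0,-1,0)$ gives $y_i\ge 0$;
\item $(c_0,c,d)=(0,1,1)$ gives $y_i\le 1$;
\item $(c_0,c,d)=(1,1,1+a_i)$ gives $x+y_i\le 1+a_i$.
\end{itemize}
Since $i$ is arbitrary, this yields \eqref{eqn:bdx}, \eqref{eqn:bdy} and \eqref{eqn:Mc2}.

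The only point requiring care is that Theorem~\ref{thm:lifting} needs $a_i\in(0,1)$ so that $S^1(a_i)$ is admissible, which holds by assumption. There is no real obstacle. The main step is verifying the pentagon's facet structure from Theorem~\ref{thm:extpts}, which is routine.
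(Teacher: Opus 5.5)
Your proposal is correct and follows the paper's own argument. You read off the pentagon $\conv(S^1(a_i))$ from the vertex description, note that its five edge inequalities are facets, and lift each one via Theorem~\ref{thm:lifting} with $Q=\{i\}$, taking $\b{c}=0$ for the $x$-bounds, which the theorem permits; the paper compresses this into one line, and your check of validity plus two tight vertices per edge supplies the routine details.
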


We refer to \eqref{eqn:bdx} and \eqref{eqn:bdy} as the \emph{bound inequalities}. Inequality~\eqref{eqn:Mc2} is the projected McCormick inequality for the bilinear constraint $xy_i\leq a_i$. In general, these one-coordinate inequalities are insufficient: whenever $\b{a}$ has at least two distinct components, additional facets are needed to capture the coupling induced by the common variable $x$. We introduce those facets next.

\subsection{The leap inequalities} \label{sec:leap}
Let $Q = \{i_1, \cdots, i_q\}$ be a nonempty subset of $[n]$ such that $a_{i_1} < \cdots < a_{i_q}$.
Set $a_{i_0}:=0$ and $a_{i_{q+1}}:=1$. The \emph{leap inequality} associated with $Q$ is
\begin{equation}\label{eqn:leap}
    x+\sum_{k=1}^{q} \bigl(a_{i_{k+1}}-a_{i_{k-1}}\bigr)y_{i_k} \leq 1+a_{i_q}.
\end{equation}
When $Q=\{i\}$, inequality~\eqref{eqn:leap} reduces to \eqref{eqn:Mc2}.

To prove validity and facetness, define
\begin{equation}\label{eqn:zk}
    \b{z}^i:=(a_i,\b{\gamma}^{\,i})
    \quad(i\in[n]),
    \qquad
    \b{z}^{n+1}:=(1,\b{a}),
\end{equation}
where $\b{\gamma}^{\,i}$ was introduced in Section~\ref{sec:extpts}. These are the componentwise maximal vertices corresponding to the possible positive values of the $x$-coordinate.

\begin{lemma}\label{lem:zk}
    The following properties hold.
    \begin{enumerate}
        \item For every extreme point $\b{z}$ of $\conv(S^n)$, there exists $i\in[n+1]$ such that $\b{0} \leq \b{z} \leq \b{z}^i$. \label{lem:zk-1}
        \item The points $\b{z}^1,\ldots,\b{z}^{n+1}$ are affinely independent if and only if the components of $\b{a}$ are pairwise distinct. \label{lem:zk-2}
        \item If $a_1 < \cdots < a_n$, then the leap inequality associated with $Q=[n]$ is active at every point $\b{z}^i$, $i\in[n+1]$. \label{lem:zk-3}
    \end{enumerate}
\end{lemma}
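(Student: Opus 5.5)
We treat the three parts separately, using the explicit vertex list of Theorem~\ref{thm:extpts}.

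For Part~\ref{lem:zk-1}, we run through the three classes in Theorem~\ref{thm:extpts}. A vertex $(0,\b{b})$ with $\b{b}\in\{0,1\}^n$ satisfies $\b{0}\leq(0,\b{b})\leq(a_1,\b{\gamma}^{\,1})=\b{z}^1$, because $\b{\gamma}^{\,1}=(1,\ldots,1)$. A vertex $(a_k,\b{b}\circ\b{\gamma}^{\,k})$ satisfies $\b{0}\leq(a_k,\b{b}\circ\b{\gamma}^{\,k})\leq(a_k,\b{\gamma}^{\,k})=\b{z}^k$, since $\b{b}\in\{0,1\}^n$ and $\b{\gamma}^{\,k}\geq\b{0}$. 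A vertex $(1,\b{b}\circ\b{a})$ satisfies $\b{0}\leq(1,\b{b}\circ\b{a})\leq(1,\b{a})=\b{z}^{n+1}$. No further work is needed here.

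For Part~\ref{lem:zk-2}, we use the differences $\b{z}^{i+1}-\b{z}^i$ for $i\in[n]$. For $i<n$, the $x$-component of $\b{z}^{i+1}-\b{z}^i$ is $a_{i+1}-a_i$. Its $y_j$-components are $a_j/a_{i+1}-a_j/a_i$ for $j\leq i$, then $1-1=0$ at $j=i+1$, and $0$ for $j>i+1$. The last difference is $\b{z}^{n+1}-\b{z}^n=(1-a_n,\ a_j-a_j/a_n)_j$, and all of its $y$-entries are nonzero.

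Suppose first that $a_i=a_{i+1}$ for some $i$. Then $\b{\gamma}^{\,i}=\b{\gamma}^{\,i+1}$, because the $(i+1)$st entry of $\b{\gamma}^{\,i+1}$ is $1=a_{i+1}/a_i$. Hence $\b{z}^i=\b{z}^{i+1}$, and the points are not affinely independent.

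Now suppose the $a_i$ are pairwise distinct. We show that the $n$ difference vectors are linearly independent. Dividing $\b{z}^{i+1}-\b{z}^i$ by $a_{i+1}-a_i$ gives the vector
\[
\left(1,\ -\tfrac{a_1}{a_ia_{i+1}},\ldots,-\tfrac{a_i}{a_ia_{i+1}},\ 0,\ldots,0\right).
\]
We subtract consecutive normalized differences to eliminate the $x$-coordinate. The resulting vectors live in the $y$-space and have a triangular support structure.

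The cleaner alternative is to show that the $(n+1)\times(n+1)$ matrix with rows $(1,\b{z}^i)$ is nonsingular. We would pick whichever of these two routes yields a transparent triangular form. The plan is to order the $y$-coordinates so that $\b{z}^i$ has ones in positions $j\geq i$, giving an upper-triangular block up to a rank-one correction coming from the entries $a_j/a_i$. This determinant computation is the main obstacle. We first try row operations $\text{row}_{i}-\text{row}_{i+1}$ on the matrix $[\b{z}^i]$ in the coordinates $y_n,\ldots,y_1,x$. These produce the pivot $1-a_i/a_{i+1}\neq0$ in column $y_i$. In the remaining column, the entry has the form $a_i-a_{i+1}$ times a nonzero factor.

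For Part~\ref{lem:zk-3}, take $Q=[n]$, where the leap inequality reads
\[
x+\sum_{k}(a_{k+1}-a_{k-1})y_k\leq 1+a_n,
\]
with $a_0=0$ and $a_{n+1}=1$. We check this inequality with equality by direct substitution. At $\b{z}^{n+1}$ the left-hand side is $1+\sum_k(a_{k+1}-a_{k-1})a_k$. This sum telescopes, since $\sum_k a_{k+1}a_k-\sum_k a_ka_{k-1}=a_{n+1}a_n=a_n$, so the left-hand side equals $1+a_n$.

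At $\b{z}^i$ the left-hand side is
\[
a_i+\frac{1}{a_i}\sum_{k\leq i}(a_{k+1}-a_{k-1})a_k+\sum_{k>i}(a_{k+1}-a_{k-1}).
\]
The first sum telescopes to $a_{i+1}a_i$. The second sum telescopes to $a_{n+1}+a_n-a_{i+1}-a_i$. Adding these, the total is $a_i+a_{i+1}+1+a_n-a_{i+1}-a_i=1+a_n$, as required.
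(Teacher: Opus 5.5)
Parts~1 and~3, and the ``only if'' half of Part~2, are correct and match the paper's arguments. For Part~1 you use domination by $\b{z}^1$, $\b{z}^k$ and $\b{z}^{n+1}$. For the ``only if'' half, $a_i=a_{i+1}$ gives $\b{z}^i=\b{z}^{i+1}$, which is the paper's pair of equal columns of $\Omega$. For Part~3 you use the same two telescoping sums.

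The gap is the ``if'' half of Part~2, which is the only nontrivial step. You sketch two routes, finish neither, and explicitly leave ``the determinant computation'' as the main obstacle.
\begin{itemize}
\item In the second route, the matrix with rows $(1,\b{z}^i)$, $i\in[n+1]$, is $(n+1)\times(n+2)$, so it is not square. The square matrix with rows $\b{z}^i$ that you then reduce encodes linear independence, which is stronger than what is needed. After your operations $\b{z}^i-\b{z}^{i+1}$, the row $\b{z}^{n+1}=(1,\b{a})$, all of whose $y$-entries are nonzero, is never reduced. So the last pivot is never shown to be nonzero.
\item In the first route, subtracting consecutive normalized differences yields only $n-1$ vectors in $y$-space. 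You still need to keep one vector with nonzero $x$-entry, and you never check that the claimed triangular structure has nonzero diagonal.
\end{itemize}

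Everything needed is already in your own computation; no normalization or determinant is required. For $i<n$, the $y$-part of $\b{z}^{i+1}-\b{z}^i$ is supported on $y_1,\ldots,y_i$, with $y_i$-entry $a_i/a_{i+1}-1$. The $y_n$-entry of $\b{z}^{n+1}-\b{z}^n$ is $a_n-1$. Hence the $n\times n$ matrix formed by the $y$-parts of these $n$ differences is lower triangular. Its diagonal is nonzero whenever the $a_i$ are distinct. The differences are therefore linearly independent, which is exactly affine independence of $\b{z}^1,\ldots,\b{z}^{n+1}$.

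The paper gets the same triangular pattern with a different base point. Subtracting the all-ones first row of $\Omega$ amounts to working with $\b{z}^i-\b{1}$, whose $y_j$-entries vanish for $j\geq i$. Deleting the first row then leaves an upper-triangular $(n+1)\times(n+1)$ matrix with diagonal $a_1-1,\ a_1/a_2-1,\ \ldots,\ a_{n-1}/a_n-1,\ a_n-1$.
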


\begin{proof}
\begin{enumerate}
    \item The claim follows directly from Theorem~\ref{thm:extpts}. A vertex with first coordinate $a_i$ is dominated by $\b{z}^i$, a vertex with first coordinate $1$ is dominated by $\b{z}^{n+1}$, and a vertex with first coordinate $0$ is dominated by $\b{z}^1$.

    \item Let $\Omega \in \Re^{(n+2)\times(n+1)}$ be the matrix whose $i$th column is $\begin{bmatrix}
    1 \\ \b{z}^i \end{bmatrix}$ for $i \in [n+1]$. Thus,
    \[
        \Omega = \begin{bmatrix} 1 & \cdots & 1 \\ \b{z}^1 & \cdots & \b{z}^{n+1} \end{bmatrix}
        = \begin{bmatrix}
            1   & 1         & 1         & \cdots & 1        & 1 \\
            a_1 & a_2       & a_3       & \cdots & a_n      & 1 \\
            1   & a_1/a_2   & a_1/a_3   & \cdots & a_1/a_n  & a_1 \\
            1   & 1         & a_2/a_3   & \cdots & a_2/a_n  & a_2 \\
            \vdots & \vdots & \vdots    & \ddots & \vdots   & \vdots \\
            1   & 1         & 1         & \cdots & a_{n-1}/a_n  & a_{n-1} \\
            1   & 1         & 1         & \cdots & 1        & a_n
        \end{bmatrix}.
    \]
    The points $\b{z}^1,\ldots,\b{z}^{n+1}$ are affinely independent if and only if the columns of $\Omega$ are linearly independent, or equivalently, if and only if
    \[
        \rank(\Omega)=n+1.
    \]

    Subtract the first row of $\Omega$ from each of its remaining rows. This row operation preserves the rank and produces
    \[
        \Omega' =
        \begin{bmatrix}
            1     & 1           & 1           & \cdots  & 1             & 1 \\
            a_1-1 & a_2-1       & a_3-1       & \cdots  & a_n-1         & 0 \\
            0     & a_1/a_2-1   & a_1/a_3-1   & \cdots  & a_1/a_n-1     & a_1-1 \\
            0     & 0           & a_2/a_3-1   & \cdots  & a_2/a_n-1     & a_2-1 \\
            \vdots& \vdots      & \vdots      & \ddots  & \vdots        & \vdots \\
            0     & 0           & 0           & \cdots  & a_{n-1}/a_n-1 & a_{n-1}-1 \\
            0     & 0           & 0           & \cdots  & 0             & a_n-1
        \end{bmatrix}.
    \]

    Suppose first that the components of $\b{a}$ are distinct. Delete the first row of $\Omega'$. The resulting $(n+1)\times(n+1)$ submatrix is upper triangular, with diagonal entries
    \[
        a_1-1, \quad \frac{a_1}{a_2}-1, \quad \frac{a_2}{a_3}-1, \quad \ldots, \quad \frac{a_{n-1}}{a_n}-1, \quad a_n-1.
    \]
    Each of these entries is nonzero. Therefore, this submatrix has a nonzero determinant, and hence
    \[
        \rank(\Omega) = \rank(\Omega') = n+1.
    \]
    It follows that $\b{z}^1,\ldots,\b{z}^{n+1}$ are affinely independent.

    Conversely, suppose that the components of $\b{a}$ are not distinct. Since $a_1\leq\cdots\leq a_n$, there exists $i\in[n-1]$ such that $a_i=a_{i+1}$. In this case, the $i$th and $(i+1)$st columns of $\Omega$ are identical. Consequently, $\rank(\Omega)<n+1$, so $\b{z}^1, \ldots, \b{z}^{n+1}$ are affinely dependent.

    \item Let $\Lambda$ denote the left-hand side of the leap inequality for $Q=[n]$, with $a_0=0$ and $a_{n+1}=1$. For $i\in[n]$,
    \begin{align*}
        \Lambda(\b{z}^i)
        &= a_i + \frac{1}{a_i}\sum_{k=1}^{i} a_k(a_{k+1}-a_{k-1}) + \sum_{k=i+1}^{n}(a_{k+1}-a_{k-1}) \\
        &= a_i + a_{i+1} + \bigl(1+a_n-a_i-a_{i+1}\bigr) = 1+a_n.
    \end{align*}
    Similarly,
    \[
        \Lambda(\b{z}^{n+1}) = 1+\sum_{k=1}^{n}a_k(a_{k+1}-a_{k-1}) = 1+a_n.
    \]
\end{enumerate}
\end{proof}

\begin{proposition}\label{prop:leap}
    For every nonempty set $Q = \{i_1, \cdots, i_q\} \subseteq [n]$ such that $a_{i_1} < \cdots < a_{i_q}$, the leap inequality~\eqref{eqn:leap} defines a facet of $\conv(S^n)$.
\end{proposition}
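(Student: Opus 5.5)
First reduce to the case $Q=[q]$ with strictly increasing parameters via Theorem~\ref{thm:lifting}. The leap inequality \eqref{eqn:leap} has the form \eqref{eqn:lift_high}, with zero coefficients on coordinates outside $Q$ and coefficient vector $\b{c}$ given by $c_k=a_{i_{k+1}}-a_{i_{k-1}}$. By Theorem~\ref{thm:lifting}, it is valid and facet-defining for $\conv(S^n)$ if and only if the leap inequality associated with the full index set is so for $\conv(S^q(\b{a}^Q))$. Here $\b{a}^Q=(a_{i_1},\ldots,a_{i_q})$ is strictly increasing. So it suffices to prove the claim for $Q=[n]$ under the assumption $a_1<\cdots<a_n$.

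\textbf{Validity.} By Theorem~\ref{thm:extpts}, $\conv(S^n)=\conv(\mathcal V^n)$, so it is enough to check the inequality at each extreme point. All coefficients of \eqref{eqn:leap} are nonnegative: the $x$-coefficient is $1$, and $a_{k+1}-a_{k-1}>0$. Hence the left-hand side $\Lambda$ is monotone nondecreasing on $\Re^{n+1}_+$. By Lemma~\ref{lem:zk}\eqref{lem:zk-1}, every extreme point $\b{z}$ satisfies $\b{0}\le\b{z}\le\b{z}^i$ for some $i$. Therefore $\Lambda(\b{z})\le\Lambda(\b{z}^i)=1+a_n$, using Lemma~\ref{lem:zk}\eqref{lem:zk-3}.

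\textbf{Facetness.} We need $n+1$ affinely independent points of $\conv(S^n)$ on which \eqref{eqn:leap} is tight. The natural candidates are $\b{z}^1,\ldots,\b{z}^{n+1}$. They are tight by Lemma~\ref{lem:zk}\eqref{lem:zk-3}, they belong to $\mathcal V^n\subseteq\conv(S^n)$, and they are affinely independent by Lemma~\ref{lem:zk}\eqref{lem:zk-2}, since the $a_k$ are distinct. Since $\conv(S^n)$ is full-dimensional in $\Re^{n+1}$ and the inequality is valid and not satisfied with equality everywhere (at $\b 0$ we get $0<1+a_n$), the face it defines has dimension $n$, i.e., it is a facet.

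The only real point of care is the reduction step. We must check that the restricted parameter vector $\b{a}^Q$ is admissible (nondecreasing, in $(0,1)$), which holds because $a_{i_1}<\cdots<a_{i_q}$. We must also check that the conventions $a_{i_0}=0$ and $a_{i_{q+1}}=1$ coincide with $a_0=0$ and $a_{q+1}=1$ for the restricted problem, so that the coefficients of the lifted and restricted inequalities match exactly. Everything else is supplied by Lemma~\ref{lem:zk} and Theorem~\ref{thm:extpts}, so I expect no substantive obstacle beyond this bookkeeping.
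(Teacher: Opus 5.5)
Your proposal is correct and follows the paper's route. You prove validity for $Q=[n]$ with $a_1<\cdots<a_n$ using nonnegative coefficients and the domination $\b{0}\le\b{z}\le\b{z}^i$, prove facetness with the tight, affinely independent points $\b{z}^1,\ldots,\b{z}^{n+1}$ of Lemma~\ref{lem:zk}, and get the general case by applying this to $\conv(S^q(\b{a}^Q))$ and lifting with Theorem~\ref{thm:lifting}; your bookkeeping (admissibility of $\b{a}^Q$, agreement of the index and value orderings, and the conventions $a_{i_0}=0$, $a_{i_{q+1}}=1$) is exactly what the paper's one-line reduction leaves implicit.
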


\begin{proof}
    First suppose that $Q=[n]$ and $a_1<\cdots<a_n$. Let $\Lambda$ denote the left-hand side of \eqref{eqn:leap}. For every extreme point $\b{z}$ of $\conv(S^n)$, Part~\ref{lem:zk-1} of Lemma~\ref{lem:zk} gives an index $i\in[n+1]$ such that $\b{0} \leq \b{z} \leq \b{z}^i$. All coefficients of $\Lambda$ are nonnegative, and Part~\ref{lem:zk-3} gives $\Lambda(\b{z}^i)=1+a_n$. Hence $\Lambda(\b{z})\leq1+a_n$.
    The inequality is therefore valid for every extreme point, and hence for $\conv(S^n)$. By Parts~\ref{lem:zk-2} and~\ref{lem:zk-3}, the $n+1$ affinely independent points $\b{z}^1,\ldots,\b{z}^{n+1}$ satisfy the inequality at equality. Thus, it defines a facet.

    For a general set $Q$, the preceding argument applied to $\conv(S^q(\b{a}^Q))$ shows that the full-support leap inequality in the reduced space is facet-defining. Theorem~\ref{thm:lifting} then lifts it to \eqref{eqn:leap} in the original space.
\end{proof}

\subsection{Completeness of the facet description}

The bound and leap inequalities are all facet-defining. We now prove that no other facets occur. We begin with the monotonicity of $S^n$ and its convex hull.

\begin{lemma}\label{lem:inclusionxy}
    Let $0\leq u\leq x$ and $\b{0}\leq\b{v}\leq\b{y}$.
    \begin{enumerate}
        \item If $(x,\b{y})\in S^n$, then $(u,\b{v})\in S^n$.
        \label{lem:inclusionxy-1}
        \item If $(x,\b{y})\in\conv(S^n)$, then $(u,\b{v})\in\conv(S^n)$.
        \label{lem:inclusionxy-2}
    \end{enumerate}
\end{lemma}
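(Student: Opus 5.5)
Part~\ref{lem:inclusionxy-1} is a direct check of the defining inequalities. If $(x,\b{y})\in S^n$ and $0\leq u\leq x$, $\b{0}\leq\b{v}\leq\b{y}$, then $(u,\b{v})\in[0,1]^{n+1}$ because $u\leq x\leq 1$ and $v_j\leq y_j\leq 1$. Moreover, all quantities are nonnegative, so
\[
    uv_j\leq xy_j\leq a_j \qquad \text{for every } j\in[n].
\]
Hence $(u,\b{v})\in S^n$.

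For Part~\ref{lem:inclusionxy-2}, a convex combination does not decrease coordinates uniformly, so shrinking each summand proportionally need not work when a coordinate of some summand is zero. I would therefore avoid decomposing $(u,\b{v})$ directly and instead reduce one coordinate at a time, using that $\conv(S^n)$ is convex.

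Write $(x,\b{y})=\sum_r\rho_r(x^r,\b{y}^r)$ with $(x^r,\b{y}^r)\in S^n$. By Part~1, setting the $x$-coordinate to zero keeps each summand in $S^n$, so $(0,\b{y})\in\conv(S^n)$. Since $u=(u/x)\,x+(1-u/x)\cdot 0$ when $x>0$ (and $u=0$ trivially when $x=0$), the point $(u,\b{y})$ is a convex combination of $(x,\b{y})$ and $(0,\b{y})$, hence lies in $\conv(S^n)$.

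Repeating the same step for each $y_j$ in turn lowers the coordinates one by one: at step $j$, zero out $y_j$ in each summand of the current point's decomposition and interpolate between the two points. This reaches $(u,\b{v})$.

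The only subtle point is exactly this coordinatewise reduction. It replaces a naive scaling argument, and it needs the zeroed points to stay in $S^n$, which Part~1 guarantees.
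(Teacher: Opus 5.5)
Your proof is correct, and Part 1 matches the paper. For Part 2 the paper takes a one-shot route. It fixes a single decomposition $(x,\b{y})=\sum_t\lambda_t(x^t,\b{y}^t)$ and chooses $\alpha_0,\ldots,\alpha_n\in[0,1]$ with $u=\alpha_0x$ and $v_j=\alpha_jy_j$. It then observes that $(u,\b{v})=\sum_t\lambda_t(\alpha_0x^t,\alpha_1y^t_1,\ldots,\alpha_ny^t_n)$, with each summand lying in $S^n$ by Part 1.

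Your stated reason for avoiding this proportional shrinking is mistaken. The same factor $\alpha_j$ is applied to every summand, so the weighted sum of the $j$th coordinates is exactly $\alpha_jy_j=v_j$. Summands whose $j$th coordinate is zero simply stay zero, and when $y_j=0$ one has $v_j=0$, so any $\alpha_j$ works.

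In fact, your interpolation step computes precisely this scaling:
\[
\tfrac{u}{x}\sum_r\rho_r(x^r,\b{y}^r)+\bigl(1-\tfrac{u}{x}\bigr)\sum_r\rho_r(0,\b{y}^r)=\sum_r\rho_r\bigl(\tfrac{u}{x}x^r,\b{y}^r\bigr).
\]
Your coordinate-by-coordinate argument is therefore the paper's argument unrolled into $n+1$ steps, each needing a fresh decomposition of the current point. It is valid, but it gains nothing over scaling all coordinates at once within one decomposition.
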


\begin{proof}
    Part~\ref{lem:inclusionxy-1} follows immediately from the definition of $S^n$.

    For Part~\ref{lem:inclusionxy-2}, choose $\alpha_0,\ldots,\alpha_n\in[0,1]$ such that
    \[
        u=\alpha_0x
        \qquad\text{and}\qquad
        v_j=\alpha_jy_j\quad(j\in[n]).
    \]
    Write
    \[
        (x,\b{y})=\sum_{t=1}^{m}\lambda_t(x^t,\b{y}^t),
    \]
    where $\lambda_t\geq0$, $\sum_t\lambda_t=1$, and $(x^t,\b{y}^t)\in S^n$. Then
    \[
        (u,\b{v})
        =\sum_{t=1}^{m}\lambda_t
        \bigl(\alpha_0x^t,
        \alpha_1y^t_1,\ldots,\alpha_ny^t_n\bigr).
    \]
    Each point in this convex combination belongs to $S^n$ by Part~\ref{lem:inclusionxy-1}, proving the claim.
\end{proof}

We next classify facets with a negative coefficient and facets that do not involve $x$.

\begin{proposition}\label{prop:negprop}
    Let $c_0x + \b{c}^\intercal\b{y} \leq d$ define a facet of $\conv(S^n)$. If $c_i<0$ for some $i\in\{0\} \cup [n]$, where index $0$ corresponds to $x$, then $d=0$ and $c_j=0$ for every $j\neq i$. Consequently, the inequality is a positive scalar multiple of $x \geq 0$ when $i=0$, or of $y_i \geq 0$ when $i \in [n]$.
\end{proposition}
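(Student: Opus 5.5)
The plan is to combine the down-monotonicity of $\conv(S^n)$ from Lemma~\ref{lem:inclusionxy} with the fact that a facet contains $n+1$ affinely independent extreme points. Write $w_0:=x$ and $w_j:=y_j$, and let $F$ be the facet defined by $c_0x+\b{c}^\intercal\b{y}\le d$, with $c_i<0$.

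\textbf{Step 1: every point of $F$ has $w_i=0$.} Take any point $\b{p}\in F$ with $p_i>0$. Lowering its $i$th coordinate to $0$ gives a point $\b{p}'$, which lies in $\conv(S^n)$ by Part~\ref{lem:inclusionxy-2} of Lemma~\ref{lem:inclusionxy}. Since $c_i<0$, the left-hand side increases by $-c_ip_i>0$, so it exceeds $d$ at $\b{p}'$. This contradicts validity. Hence $F\subseteq\{w_i=0\}$.

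\textbf{Step 2: identify the facet.} The hyperplane $\{w_i=0\}$ supports $\conv(S^n)\subseteq\Re^{n+1}_+$, so $F':=\conv(S^n)\cap\{w_i=0\}$ is a face. We have $F\subseteq F'$ by Step~1. The face $F'$ is proper: the interior point $(\tfrac12,a_1,\ldots,a_n)$ has all coordinates positive, so $\conv(S^n)\not\subseteq\{w_i=0\}$. Because $F$ is a facet and $F'$ is a proper face containing it, $F=F'$. In fact $F'$ has dimension $n$, since it contains $\b{0}$ and the $n$ points $\b{e}_j$ (or $(0,\b{e}_j)$) for $j\ne i$; these are vertices of the form $(0,\b{b})$ or $(1,\b{0})$ from Theorem~\ref{thm:extpts}.

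\textbf{Step 3: read off the coefficients.} $\conv(S^n)$ is full-dimensional, so its facet-defining inequalities are unique up to a positive scalar. The same facet $F'$ is also defined by $-w_i\le0$. Hence $(c_0,\b{c},d)=\lambda(-\b{e}_i,0)$ for some $\lambda>0$. This gives $d=0$ and $c_j=0$ for all $j\ne i$.

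If one wants to avoid the uniqueness fact, the coefficients can be read off directly. Since $\b{0}\in F$, we get $d=0$. For each $j\ne i$, the point with $w_j=1$ and all other coordinates zero lies in $F$, which forces $c_j=0$.

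The only step with real content is Step~1, and it is immediate from Lemma~\ref{lem:inclusionxy}. So I expect no serious obstacle; the one care point is checking that the unit points used in Step~2 are genuinely vertices of $\conv(S^n)$.
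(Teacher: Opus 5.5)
Your proposal is correct and follows essentially the same route as the paper: down-monotonicity of $\conv(S^n)$ (Lemma~\ref{lem:inclusionxy}) forces $F\subseteq\{w_i=0\}$, and maximality of facets then gives $F=\conv(S^n)\cap\{w_i=0\}$. The only minor difference is the finish. The paper cites Corollary~\ref{cor:lifting} to know this face is a facet and then uses uniqueness of facet-defining inequalities, whereas you need only that the face is proper and then read off $d=0$ and $c_j=0$ directly from $\b{0}$ and the unit points, which is valid and slightly more self-contained.
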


\begin{proof}
    Let 
    \[
        F := \conv(S^n) \cap \{(x,\b{y}) \mid c_0 x + \b{c}^\intercal \b{y} = d\}
    \]
    be the facet defined by the inequality $c_0 x + \b{c}^\intercal \b{y} \le d$.
    First, consider the case where $c_i < 0$ for some $i \in [n]$. For an arbitrary point $(\bar{x}, \bar{\b{y}}) \in F$, we claim that $\bar{y}_i = 0$. Otherwise, suppose that $\bar{y}_i > 0$. By Lemma~\ref{lem:inclusionxy}, we have $(\bar{x}, \bar{\b{y}} - \epsilon \b{e}_i) \in \conv(S^n)$
    for a sufficiently small $\epsilon > 0$. For such an $\epsilon$, we obtain
    \[
        c_0 \bar{x} + \b{c}^\intercal (\bar{\b{y}} - \epsilon \b{e}_i)
        = c_0 \bar{x} + \b{c}^\intercal \bar{\b{y}} - \epsilon c_i
        > c_0 \bar{x} + \b{c}^\intercal \bar{\b{y}}
        = d.
    \]
    This contradicts the validity of the inequality $c_0 x + \b{c}^\intercal\b{y} \le d$. Hence, every point in $F$ satisfies $\bar{y}_i = 0$. As a result, the facet $F$ is a subset of $\conv(S^n) \cap \{(x,\b{y}) \mid y_i = 0\}$, which, by Corollary~\ref{cor:lifting}, is itself a facet of $\conv(S^n)$. Since two facets of a full-dimensional polytope cannot be properly nested, the two faces are equal. Their supporting hyperplanes therefore coincide, and the corresponding facet-defining inequalities agree up to positive scaling. This implies that $c_j = 0$ for all $j\in\bigl(\{0\}\cup[n]\bigr)\setminus\{i\}$ and $d = 0$.
    
    The case where $c_0 < 0$ is handled analogously by applying the same argument to the $x$-coordinate.
\end{proof}

\begin{proposition}\label{prop:poscofacets-x}
    Let $c_0x+\b{c}^\intercal\b{y} \leq d$ define a facet of $\conv(S^n)$. If $\b{c} \ge \b{0}$ and $c_0=0$, then there exists a unique $i \in [n]$ such that $c_i > 0$. Consequently, the inequality is a positive scalar multiple of $y_i \leq 1$ for a unique $i\in[n]$.
\end{proposition}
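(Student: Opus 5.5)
We want to show: if a facet has $c_0=0$ and $\b{c}\geq\b{0}$, then exactly one $c_i$ is positive, and the inequality is a positive multiple of $y_i\leq 1$.

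\textbf{Step 1: computing the right-hand side.} Since $c_0=0$, the inequality reads $\b{c}^\intercal\b{y}\leq d$. By Theorem~\ref{thm:extpts}, every point $(0,\b{b})$ with $\b{b}\in\{0,1\}^n$ is an extreme point of $\conv(S^n)$. In particular $(0,\b{1})\in\conv(S^n)$, and Lemma~\ref{lem:inclusionxy} shows that $\conv(S^n)$ contains $\{0\}\times[0,1]^n$. Because $\b{c}\geq\b{0}$, the maximum of $\b{c}^\intercal\b{y}$ over $\conv(S^n)$ is therefore attained at $\b{y}=\b{1}$, and it cannot exceed that value since $\conv(S^n)\subseteq[0,1]^{n+1}$. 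A facet-defining inequality must be tight somewhere on $\conv(S^n)$, so $d=\sum_j c_j$.

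If $\b{c}=\b{0}$, then $d=0$ and the inequality is the trivial $0\leq0$, which defines no proper face. Hence $P:=\{j\mid c_j>0\}$ is nonempty.

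\textbf{Step 2: locating the facet.} Let $F$ be the facet. Every $(\bar x,\bar{\b y})\in F$ satisfies $\sum_j c_j(1-\bar y_j)=0$ with all terms nonnegative, so $\bar y_j=1$ for every $j\in P$. Fix some $i\in P$. Then $F\subseteq\conv(S^n)\cap\{y_i=1\}$, and by Corollary~\ref{cor:lifting} this set is a facet. As in the proof of Proposition~\ref{prop:negprop}, two facets of a full-dimensional polytope cannot be properly nested, so $F=\conv(S^n)\cap\{y_i=1\}$. Their supporting hyperplanes coincide, so $(\b{c},d)$ is a positive multiple of $(\b{e}_i,1)$. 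This forces $c_j=0$ for $j\neq i$, so $P=\{i\}$ is a singleton, which gives both uniqueness and the claimed form.

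\textbf{Main obstacle.} The delicate point is uniqueness: ruling out two positive coefficients. The nesting argument handles it, since any second index $j\in P$ would have to appear in the inequality $y_i\leq1$. As an independent check, take $j\in P\setminus\{i\}$. The extreme point $(0,\b{e}_i)$ lies in $\conv(S^n)\cap\{y_i=1\}$ but has $y_j=0$, so it is not in $F$, contradicting the equality of the two faces.
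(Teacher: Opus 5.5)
Your proof is correct and follows essentially the same route as the paper: both show the inequality is dominated by the bound inequalities $y_j\le 1$, then conclude that it must coincide with one of these facets. The only difference is packaging. You make the domination explicit by computing $d=\sum_j c_j$ and deriving $F\subseteq\{y_i=1\}$, then use the non-nesting argument from Proposition~\ref{prop:negprop}. The paper instead cites the general principle that a facet-defining inequality implied by facet-defining inequalities must define the same facet as one of them.
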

\begin{proof}
    Let $I:= \{i \in [n] \mid c_i > 0\}$ be the index set of positive coefficients. Since $c_0 = 0$, the inequality reduces to
    \begin{equation} \label{eqn:y_only}
        \sum_{i \in I} c_i y_i \leq d.
    \end{equation}
    Because the inequality defines a facet, it is nontrivial, and hence $I\neq\emptyset$.
    
    Since the point $\left(0,\sum_{i\in I}\b{e}_i\right)$ belongs to $S^n$, validity of \eqref{eqn:y_only} implies $\sum_{i\in I}c_i\leq d$.
    Therefore, the facet-defining bound inequalities $y_i \leq 1$ imply \eqref{eqn:y_only} as
    \[
        \sum_{i\in I}c_i y_i \leq \sum_{i\in I}c_i \leq d.
    \]

    A facet-defining inequality of a full-dimensional polytope cannot be implied by a collection of facet-defining inequalities unless at least one of them defines the same facet. Consequently, \eqref{eqn:y_only} must be a positive scalar multiple of $y_i \leq 1$ for some $i \in I$. Since $c_j>0$ for every $j\in I$, this is possible only if $I=\{i\}$. Thus, there exists a unique
    $i\in[n]$ such that $c_i>0$, and \eqref{eqn:y_only} is a positive scalar multiple of $y_i\leq1$.
\end{proof}

The following result integrates the preceding propositions to provide the facet description of $\conv(S^n)$.

\begin{theorem} \label{thm:facets}
    Up to positive scalar multiplication, the facet-defining inequalities of $\conv(S^n)$ are precisely the bound inequalities \eqref{eqn:bdx} and \eqref{eqn:bdy}, together with the leap inequalities \eqref{eqn:leap} for all nonempty $Q = \{i_1, \ldots, i_q\} \subseteq [n]$ such that $a_{i_1} < a_{i_2} < \ldots < a_{i_q}$.
\end{theorem}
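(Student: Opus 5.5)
The direction ``every listed inequality is facet-defining'' is already available: the bound inequalities \eqref{eqn:bdx}--\eqref{eqn:bdy} are facets by Corollary~\ref{cor:lifting}, and every leap inequality is a facet by Proposition~\ref{prop:leap}. So the work is the converse. Let $c_0x+\b{c}^\intercal\b{y}\leq d$ define a facet of $\conv(S^n)$; we show it is a bound or leap inequality by splitting on the signs of the coefficients.
\begin{enumerate}
\item If some coefficient is negative, Proposition~\ref{prop:negprop} gives $x\geq0$ or $y_i\geq0$.
\item Otherwise $c_0\geq0$ and $\b{c}\geq\b{0}$. If $c_0=0$, Proposition~\ref{prop:poscofacets-x} gives $y_i\leq1$.
\item Thus it remains to treat $c_0>0$ and $\b{c}\geq\b{0}$, and after scaling we may take $c_0=1$.
\end{enumerate}

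In the remaining case, let $Q=\{i_1,\ldots,i_q\}$ be the support of $\b{c}$, ordered by index. If $Q=\emptyset$, the inequality is $x\leq d$. Validity at $(1,\b{0})$ forces $d\geq1$, and a facet must be tight, so it is $x\leq1$. Otherwise, by Theorem~\ref{thm:lifting}, the restricted inequality $x+\b{c}_Q^\intercal\b{u}\leq d$ defines a facet of $\conv(S^q(\b{a}^Q))$. It has strictly positive coefficients on every coordinate. It therefore suffices to prove the following claim: \emph{in any dimension $m$, a facet of $\conv(S^m(\b{b}))$ whose coefficients are all strictly positive is the full-support leap inequality, and it requires $b_1<\cdots<b_m$.} Lifting back with Theorem~\ref{thm:lifting} then yields exactly \eqref{eqn:leap} for $Q$, with the strict ordering $a_{i_1}<\cdots<a_{i_q}$.

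To prove the claim, I would look at which vertices can be tight. Let $\b{z}$ be a vertex of $\conv(S^m(\b{b}))$ on the facet. By Lemma~\ref{lem:zk}, Part~\ref{lem:zk-1}, there is some $i$ with $\b{0}\leq\b{z}\leq\b{z}^i$. Since $\b{z}^i$ is feasible, $d\geq\Lambda(\b{z}^i)\geq\Lambda(\b{z})=d$, where $\Lambda$ is the left-hand side. All coefficients are strictly positive, so this forces $\b{z}=\b{z}^i$. Vertices with $x=0$ cannot be tight: the dominating point used for them is $\b{z}^1$, whose $x$-coordinate $b_1>0$ strictly exceeds $0$. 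Hence the tight vertices form a subset of $\{\b{z}^1,\ldots,\b{z}^{m+1}\}$.

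A facet of the full-dimensional polytope $\conv(S^m(\b{b}))\subseteq\Re^{m+1}$ contains $m+1$ affinely independent vertices. So all $\b{z}^1,\ldots,\b{z}^{m+1}$ must be tight and affinely independent. By Lemma~\ref{lem:zk}, Part~\ref{lem:zk-2}, this forces the components of $\b{b}$ to be pairwise distinct. The hyperplane through these $m+1$ affinely independent points is then unique. By Lemma~\ref{lem:zk}, Part~\ref{lem:zk-3}, the full-support leap inequality is tight at all of them, so the facet coincides with it. Because both inequalities are normalized to $x$-coefficient $1$, they agree exactly.

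The main obstacle is the tightness argument in the previous paragraph. One must make sure that the domination in Lemma~\ref{lem:zk}, Part~\ref{lem:zk-1}, combined with strict positivity, really excludes every vertex other than the $\b{z}^i$. This includes the vertices with $x=0$ and the case of ties among the $b_j$, where distinct $\b{z}^i$ coincide. The other delicate point is passing correctly between the restricted and lifted inequalities via Theorem~\ref{thm:lifting}, so that the strict ordering of $\b{a}^Q$ is exactly the condition appearing in the theorem.
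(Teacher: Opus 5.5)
Your proposal is correct and follows the paper's proof essentially step for step: the same sign-based case split via Propositions~\ref{prop:negprop} and~\ref{prop:poscofacets-x}, the reduction to the support $Q$ through Theorem~\ref{thm:lifting}, and the domination argument from Lemma~\ref{lem:zk} showing that only the points $\b{z}^i$ can be tight, which forces distinct components of $\b{a}^Q$ and identifies the facet with the leap inequality. Your normalization $c_0=1$ and your separate remark about the $x=0$ vertices are only cosmetic refinements of that same argument.
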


\begin{proof}
    Corollary~\ref{cor:lifting} and Proposition~\ref{prop:leap} show that every inequality in the statement defines a facet. It remains to prove that there are no others.

    Let $c_0x+\b{c}^\intercal\b{y}\leq d$ be an arbitrary facet-defining inequality. Proposition~\ref{prop:negprop} handles every case in which one of its coefficients is negative, and Proposition~\ref{prop:poscofacets-x} handles the case where $\b{c} \ge \b{0}$ and $c_0=0$. We may therefore assume that $\b{c} \geq \b{0}$ and $c_0>0$.

    Let $Q:=\{i\in[n]\mid c_i>0\}$. If $Q=\emptyset$, then the inequality is $c_0x\leq d$. Since the maximum value of $x$ over $\conv(S^n)$ is $1$ and the equality face is nonempty, $d=c_0$. Thus the inequality is a positive scalar multiple of $x\leq1$.

    We may therefore assume that $\emptyset \neq Q = \{i_1, \cdots, i_q\}$ with $i_1 < \cdots < i_q$. By Theorem~\ref{thm:lifting}, the reduced inequality
    \begin{equation}\label{eqn:reduced-facet}
        c_0x+(\b{c}^Q)^\intercal\b{u}\leq d
    \end{equation}
    defines a facet $F_Q$ of
    \[
        P_Q:=\conv(S^q(\b{a}^Q)).
    \]
    Let $\b{z}_Q^1,\ldots,\b{z}_Q^{q+1}$ be the points defined by \eqref{eqn:zk} for the parameter vector $\b{a}^Q$, and let $\bar{\b{z}}$ be an extreme point of $F_Q$. Then, $\bar{\b{z}}$ is also an extreme point of $P_Q$. By Part~\ref{lem:zk-1} of Lemma~\ref{lem:zk}, there exists $\ell \in [q+1]$ such that $\bar{\b{z}} \leq \b{z}_Q^\ell$. Since all coefficients in \eqref{eqn:reduced-facet} are strictly positive, if $\bar{\b{z}}\neq\b{z}_Q^\ell$, then
    \[
        \begin{bmatrix} c_0 & (\b{c}^Q)^\intercal \end{bmatrix} \b{z}_Q^\ell >
        \begin{bmatrix} c_0 & (\b{c}^Q)^\intercal \end{bmatrix} \bar{\b{z}} = d,
    \]
    contradicting the validity of \eqref{eqn:reduced-facet}. Therefore, every extreme point of $F_Q$ belongs to the set $\{\b{z}_Q^1,\ldots,\b{z}_Q^{q+1}\}$.

    Since $F_Q$ has dimension $q$, it contains at least $q+1$ affinely independent extreme points. It follows that $\b{z}_Q^1,\ldots,\b{z}_Q^{q+1}$ are affinely independent and all belong to $F_Q$. By Part~\ref{lem:zk-2} of Lemma~\ref{lem:zk}, the components of $\b{a}^Q$ are pairwise distinct. 
    By Part~\ref{lem:zk-3}, the same $q+1$ affinely independent points satisfy at equality the reduced-space leap inequality
    \[
        x+\sum_{k=1}^{q} \bigl(a_{i_{k+1}}-a_{i_{k-1}}\bigr)u_k \leq 1+a_{i_q},
    \]
    where $a_{i_0}=0$ and $a_{i_{q+1}}=1$. Hence its equality equation and that of \eqref{eqn:reduced-facet} define the same hyperplane in $\Re^{q+1}$, and one is a positive scalar multiple of the other. Lifting back to the original space through Theorem~\ref{thm:lifting}, we conclude that the original inequality is a positive scalar multiple of the leap inequality associated with $Q$.
\end{proof}

Theorem~\ref{thm:facets} gives a complete facet description of $\conv(S^n)$. Although the number of leap inequalities may grow exponentially with $n$, the next section shows that they admit a polynomial-time separation algorithm.

\section{Separation and numerical examples}\label{sec:sep}
In this section, we develop a polynomial-time separation procedure for the leap inequalities. By Theorem~\ref{thm:facets}, $\conv(S^n)$ is described by the bound inequalities and the leap inequalities. Hence, for a point $(x^\ast,\b{y}^\ast)\in[0,1]^{n+1}$, it suffices to separate over the family of leap inequalities. In exact arithmetic, the procedure either returns a violated leap inequality or certifies that $(x^\ast,\b{y}^\ast)\in\conv(S^n)$. Its numerical implementation uses a tolerance $\epsilon\geq0$ and certifies instead that no leap inequality is violated by more than $\epsilon$. Our construction is inspired by the separation framework of \cite{atamturk2000mixed} for the mixed vertex-packing problem.

Let $G=(\cN,\cA)$ be the directed graph with node set
\[
    \cN:=\{s\}\cup[n]\cup\{t\},
\]
where $s$ and $t$ are the source and sink, respectively, and arc set
\[
    \cA :=\{(s,j)\mid j\in[n]\} \cup \{(i,j)\mid 1\leq i<j\leq n,\ a_i<a_j\} \cup \{(i,t)\mid i\in[n]\}.
\]
By construction, $G$ is a directed acyclic graph (DAG). For each arc $(i,j)\in\cA$, define its length by
\begin{equation}\label{eq:arccost}
    \omega_{ij} :=
    \begin{cases}
        x^\ast, & i=s,\\
        a_jy_i^\ast - a_iy_j^\ast, & 1 \leq i < j \leq n,\\
        y_i^\ast-(1+a_i), & j=t.
    \end{cases}
\end{equation}

\begin{example}\label{ex:network}
    Let $n=4$ and $\b{a}=(0.2,0.3,0.3,0.5)$. The corresponding network is illustrated in Fig.~\ref{fig:leapnetwork}. In particular, $(2,3)$ is not an arc because $a_2=a_3$.

    \begin{figure}[htbp]
        \centering
        \begin{tikzpicture}
        \tikzset{
            EdgeLabelStyle/.style={
                fill=none,
                draw=none,
                text opacity=1
            }
        }
        \Vertex[label=s]{0}
        \Vertex[x=2,y=2,label=1]{1}
        \Vertex[x=2,y=-2,label=2]{2}
        \Vertex[x=5,y=2,label=3]{3}
        \Vertex[x=5,y=-2,label=4]{4}
        \Vertex[x=7,label=t]{5}
        \Edge[label=$x^\ast$,position={above left=0.5mm},Direct](0)(1)
        \Edge[position={below left=0.5mm},Direct](0)(2)
        \Edge[position={above left},Direct](0)(3)
        \Edge[position={left=1mm},Direct](0)(4)
        \Edge[position={left},Direct](1)(2)
        \Edge[label=$a_3y_1^\ast-a_1y_3^\ast$,position={above},Direct](1)(3)
        \Edge[position={above right=0.5mm},Direct](1)(4)
        \Edge[position={above right},Direct](1)(5)
        \Edge[position=below,Direct](2)(4)
        \Edge[position={above right=1mm},Direct](2)(5)
        \Edge[position=right,Direct](3)(4)
        \Edge[label=$y_3^\ast-(1+a_3)$,position={above right},Direct](3)(5)
        \Edge[position={below=0.5mm},Direct](4)(5)
        \end{tikzpicture}
        \caption{Network in Example~\ref{ex:network}; selected arc lengths are displayed.}
        \label{fig:leapnetwork}
    \end{figure}
\end{example}

\begin{proposition}\label{prop:leapseparation}
    For every $(x^\ast,\b{y}^\ast)\in[0,1]^{n+1}$, the maximum violation of a leap inequality at $(x^\ast,\b{y}^\ast)$ equals the length of a longest $s$--$t$ path in $G$. Consequently, a most violated leap inequality can be found in $\mathcal{O}(n^2)$ time. In exact arithmetic, $(x^\ast, \b{y}^\ast) \in \conv(S^n)$ if and only if the longest $s$--$t$ path has length at most zero.
\end{proposition}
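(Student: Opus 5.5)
The approach is to set up a bijection between leap inequalities and $s$--$t$ paths in $G$, and then show that the length of each path equals the violation $\Lambda_Q(x^\ast,\b{y}^\ast)-(1+a_{i_q})$ of the corresponding leap inequality.

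\textbf{Step 1: the bijection.} The admissible index sets for \eqref{eqn:leap} are the nonempty $Q=\{i_1,\ldots,i_q\}$ with $a_{i_1}<\cdots<a_{i_q}$. Because $a_1\leq\cdots\leq a_n$, these are exactly the nonempty sets whose $a$-values are pairwise distinct. Listing such a set in increasing index order gives the same order as increasing $a$-value, so consecutive elements satisfy $i_k<i_{k+1}$ and $a_{i_k}<a_{i_{k+1}}$. Hence $s\to i_1\to\cdots\to i_q\to t$ is an $s$--$t$ path in $G$. Conversely, every $s$--$t$ path visits a nonempty node sequence that is strictly increasing in both index and $a$-value, and so it arises from a unique admissible $Q$.

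\textbf{Step 2: telescoping the path length.} Using \eqref{eq:arccost}, the length of the path for $Q$ is
\[
    x^\ast+\sum_{k=1}^{q-1}\bigl(a_{i_{k+1}}y^\ast_{i_k}-a_{i_k}y^\ast_{i_{k+1}}\bigr)+y^\ast_{i_q}-(1+a_{i_q}).
\]
I collect the coefficient of each $y^\ast_{i_k}$. It receives $a_{i_{k+1}}$ from the outgoing arc; for $k=q$ this is the coefficient $1=a_{i_{q+1}}$ from the arc into $t$. It receives $-a_{i_{k-1}}$ from the incoming arc; for $k=1$ the incoming arc is $(s,i_1)$, which contributes nothing, consistent with $a_{i_0}=0$. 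The total coefficient is therefore $a_{i_{k+1}}-a_{i_{k-1}}$. So the path length is exactly the left-hand side of \eqref{eqn:leap} minus $1+a_{i_q}$, that is, the violation. It follows that the maximum violation over all leap inequalities equals the longest $s$--$t$ path length, and a longest path yields a most violated leap inequality.

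\textbf{Step 3: complexity.} Since $G$ is a DAG with topological order $s,1,\ldots,n,t$, a longest path is found by dynamic programming over the arcs. The graph has $|\cA|\le n(n-1)/2+2n=\mathcal{O}(n^2)$ arcs, and each arc length is computed in constant time. The total running time is therefore $\mathcal{O}(n^2)$, and a most violated inequality is recovered by backtracking.

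\textbf{Step 4: the membership certificate.} Since $(x^\ast,\b{y}^\ast)\in[0,1]^{n+1}$, all bound inequalities already hold. By Theorem~\ref{thm:facets}, the point therefore lies in $\conv(S^n)$ if and only if every leap inequality holds, which by Step 2 happens if and only if the longest path has length at most zero.

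The only delicate step is the boundary bookkeeping in Step 2, namely the conventions $a_{i_0}=0$ and $a_{i_{q+1}}=1$ and the case $q=1$, together with the fact in Step 1 that ties $a_i=a_j$ are correctly excluded by the arc set.
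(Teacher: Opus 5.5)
Your proposal is correct and follows the paper's proof essentially step for step. It uses the same bijection between admissible sets $Q$ and $s$--$t$ paths, the same telescoping of arc lengths into the leap violation with $a_{i_0}=0$ and $a_{i_{q+1}}=1$, the same longest-path computation on a DAG with $\mathcal{O}(n^2)$ arcs, and the same appeal to Theorem~\ref{thm:facets} for the membership equivalence.
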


\begin{proof}
    Consider a nonempty set $Q = \{i_1, \ldots, i_q\} \subseteq [n]$ with $a_{i_1} < \ldots < a_{i_q}$. The corresponding path
    \[
        s \to i_1 \to i_2 \to \cdots \to i_q \to t
    \]
    has length
    \begin{align*}
        \omega_{s,i_1} + \sum_{k=1}^{q-1}\omega_{i_k,i_{k+1}} + \omega_{i_q,t}
        &= x^\ast + \sum_{k=1}^{q-1} \left(a_{i_{k+1}}y_{i_k}^\ast-a_{i_k}y_{i_{k+1}}^\ast\right) +y_{i_q}^\ast-(1+a_{i_q}) \\
        &= x^\ast + \sum_{k=1}^{q} \left(a_{i_{k+1}}-a_{i_{k-1}}\right) y_{i_k}^\ast - (1+a_{i_q}),
    \end{align*}
    where $a_{i_0}:=0$ and $a_{i_{q+1}}:=1$, as in Section~\ref{sec:leap}. This is exactly the violation, defined as the left-hand side minus the right-hand side, of the leap inequality associated with $Q$.

    Conversely, the internal nodes of every $s$--$t$ path form a sequence $i_1<\cdots<i_q$ satisfying $a_{i_1}<\cdots<a_{i_q}$ and hence determine a unique admissible set $Q$. Thus, admissible sets $Q$ are in one-to-one correspondence with $s$--$t$ paths, and the length of each path equals the violation of the associated leap inequality.

    Since $G$ is acyclic, a longest path can be found in $\mathcal O(|\cN|+|\cA|)$ time by processing the vertices in a topological order; see, for example, Proposition~T in \cite[Section~4.4]{sedgewick2011algorithms}.
    The graph has $n+2$ vertices and $\mathcal O(n^2)$ arcs. Hence, a most violated leap inequality can be found in $\mathcal O(n^2)$ time.
    The final equivalence follows from Theorem~\ref{thm:facets}, since $(x^\ast,\b{y}^\ast) \in [0,1]^{n+1}$ already satisfies all bound inequalities.
\end{proof}

Algorithm~\ref{alg:leapseparate} implements the separation procedure. A path with a single internal node corresponds to $Q=\{i\}$ and therefore checks the projected McCormick inequality $x+y_i \leq 1+a_i$.

\begin{algorithm}[htbp]
\caption{Separation of leap inequalities (\textsc{leapSep})}\label{alg:leapseparate}
\begin{algorithmic}[1]
\Require $n$, $\b{a}$, $(x^\ast,\b{y}^\ast)\in[0,1]^{n+1}$, and tolerance $\epsilon\geq0$
\State $\cN\gets\{s\}\cup[n]\cup\{t\}$
\State $\cA\gets\{(s,j)\mid j\in[n]\}\cup\{(i,j)\mid 1\leq i<j\leq n,\ a_i<a_j\}\cup\{(i,t)\mid i\in[n]\}$
\For{each $(i,j)\in\cA$}
    \If{$i=s$}
        \State $\omega_{ij}\gets x^\ast$
    \ElsIf{$j=t$}
        \State $\omega_{ij}\gets y_i^\ast-(1+a_i)$
    \Else
        \State $\omega_{ij}\gets a_jy_i^\ast-a_iy_j^\ast$
    \EndIf
\EndFor
\State Compute a longest $s$--$t$ path $\mathcal{L}$ and its length $\eta^\ast$
\If{$\eta^\ast\leq\epsilon$}
    \State \textbf{return} no leap inequality is violated by more than $\epsilon$
\Else
    \State $Q\gets\{i_1,\ldots,i_q\}$, where $i_1,\ldots,i_q$ are the internal nodes of $\mathcal{L}$
    \State \textbf{return} $Q$, $\eta^\ast$, and the leap inequality associated with $Q$
\EndIf
\end{algorithmic}
\end{algorithm}

We conclude with two examples illustrating the strengthening obtained from leap inequalities. In both examples, the constraints $xy_i\leq a_i$ induce a copy of $S^n$ in the $(x,\b{y})$-space. To obtain the leap-enhanced bounds, we repeatedly solve the current relaxation and apply Algorithm~\ref{alg:leapseparate} to an optimal solution. Whenever the maximum violation exceeds the prescribed tolerance $\epsilon$, we add the corresponding most violated leap inequality and solve the resulting relaxation again. The process terminates when no leap inequality is violated by more than $\epsilon$. In these two examples, the separation tolerance is set to $\epsilon = 10^{-6}$, and the relaxations are solved to optimality using Gurobi 12.0.3 with default settings. The number of cuts reported below is the total number of leap inequalities added.

For a maximization problem, let $z_{\mathrm{Mc}}$ denote the McCormick relaxation bound, $z_{\mathrm{rel}}$ the bound obtained from a strengthened relaxation, and $z_{\mathrm{glob}}$ the globally optimal value. We report the percentage of the McCormick gap closed by the strengthened relaxation:
\begin{equation}\label{eq:gapclosed}
    \operatorname{GapClosed}(\%) := \frac{z_{\mathrm{Mc}}-z_{\mathrm{rel}}}{z_{\mathrm{Mc}}-z_{\mathrm{glob}}} \cdot 100\%.
\end{equation}

\begin{example} \label{eg1}
    Consider
    \begin{equation*}
    \begin{aligned}
        \max \quad  & 25xy_2+10x+10y_1-10y_2+17.5y_3 \\
        \text{s.t.} \quad & xy_1 \leq \frac18, \qquad xy_2 \leq \frac14, \qquad xy_3 \leq \frac12, \\
                    & (x,\b{y}) \in [0,1]^4.
    \end{aligned}
    \end{equation*}
    
    Introduce auxiliary variables $w_i=xy_i$, $i\in[3]$, and let $\b{a} = (1/8, 1/4, 1/2)$. After replacing every occurrence of $xy_i$ by $w_i$, including in the objective, the baseline LP relaxation imposes, for each $i\in[3]$,
    \begin{equation}\label{eq:example-mccormick}
        0 \leq w_i, \qquad  w_i \leq x, \qquad  w_i \leq y_i, \qquad
        w_i \geq x+y_i-1, \qquad    w_i \leq a_i.
    \end{equation}
    These are the McCormick inequalities together with the upper bound inherited from $xy_i\leq a_i$.

    As shown in \cite{anstreicher2021convex}, for $a\in(0,1)$, the convex hull of
    \[
        \{(x,y,w) \mid w=xy, (x,y) \in [0,1]^2, w\leq a\}
    \]
    is described by the McCormick inequalities, $w\leq a$, and the rotated second-order-cone inequality
    \[
        w^2 \leq axy.
    \]
    We therefore strengthen the baseline by imposing $w_i^2 \leq a_ixy_i$ for each $i \in [3]$. We compare the baseline with the SOC-strengthened relaxation, the relaxation augmented with separated leap inequalities, and the relaxation containing both strengthenings.

    The globally optimal value is $28.75$. Table~\ref{tab:eg1} reports the relaxation bounds, the number of leap cuts added, and the gap closed according to \eqref{eq:gapclosed}. The SOC and leap strengthenings close $47.92\%$ and $33.33\%$ of the McCormick gap, respectively, while their combination closes $86.98\%$.

    \begin{table}[htbp]
        \centering
        \small
        \begin{tabular}{lcccc}
            \toprule
            & McCormick & $+$ SOC & $+$ leap & $+$ SOC $+$ leap\\
            \midrule
            Relaxation bound & 32.50 & 30.70 & 31.25 & 29.24\\
            Leap cuts added  & --    & --    & 1     & 1\\
            Gap closed (\%)  & --    & 47.92 & 33.33 & 86.98\\
            \bottomrule
        \end{tabular}
        \caption{Comparison of the relaxations for Example~\ref{eg1}.}
        \label{tab:eg1}
    \end{table}
\end{example}

\begin{example} \label{eg2}
    Consider
    \begin{equation*}
    \begin{aligned}
        \max \quad  & 30xy_2+16x+14y_1-2y_2+21y_3-4y_4 \\
        \text{s.t.} \quad   & x+10y_4 \leq 7.5, \\
                    & \sum_{i=1}^4xy_i+x+\sum_{i=1}^4y_i\leq3.5, \\
                    & xy_1 \leq \frac18, \qquad xy_2 \leq \frac14, \qquad
                      xy_3 \leq \frac12, \qquad xy_4 \leq \frac34, \\
                    & (x,\b{y}) \in [0,1]^5.
    \end{aligned}
    \end{equation*}

    We introduce $w_i=xy_i$, $i\in[4]$, and replace every occurrence of $xy_i$ by $w_i$, including those in the objective and the second coupling constraint. We then impose the baseline formulation \eqref{eq:example-mccormick} with $\b{a}=(1/8,1/4,1/2,3/4)$, together with the two linear constraints. We compare the same four relaxations as in Example~\ref{eg1}.

    The globally optimal value is $39$. The results are reported in Table~\ref{tab:eg2}. The SOC and leap strengthenings close $8.70\%$ and $34.78\%$ of the McCormick gap, respectively, while their combination closes $63.15\%$. 

    \begin{table}[htbp]
        \centering
        \small
        \begin{tabular}{lcccc}
            \toprule
            & McCormick & $+$ SOC & $+$ leap & $+$ SOC $+$ leap \\
            \midrule
            Relaxation bound & 44.75 & 44.25 & 42.75 & 41.12 \\
            Leap cuts added  & --    & --    & 1     & 2 \\
            Gap closed (\%)  & --    & 8.70  & 34.78 & 63.15 \\
            \bottomrule
        \end{tabular}
        \caption{Comparison of the relaxations for Example~\ref{eg2}.}
        \label{tab:eg2}
    \end{table}
\end{example}

The examples indicate that neither strengthening dominates the other. The SOC constraints strengthen the relaxation by characterizing the convex hull of each individual bilinear term under an upper bound, whereas the leap inequalities tighten the relaxation by capturing structural interactions among multiple bilinear terms.

\section{Conclusion}
In this paper, we studied the convexification of bilinear sets in which the terms share a common variable and are subject to individual upper bounds. We derived complete vertex and facet descriptions of $\conv(S^n)$, thereby providing a full polyhedral characterization of the convex hull. Both descriptions exploit the ordering and values of the bilinear upper bounds.

The facet description consists of the bound inequalities together with a structured family of leap inequalities that capture interactions among multiple bilinear terms. By exploiting the dependence induced by the common variable, these inequalities can strengthen the standard McCormick relaxation. Although the family may contain exponentially many inequalities, we showed that a most violated leap inequality can be found in polynomial time through a longest-path problem on a directed acyclic graph. Our numerical examples further illustrate that leap inequalities and second-order cone constraints can provide complementary improvements to convex relaxation bounds.

These results provide new tools for constructing stronger relaxations of bipartite bilinear programs. Future research directions include extending the analysis to sets involving multiple shared variables or more general bilinear interaction structures. It would also be valuable to integrate the separation procedure into spatial branch-and-bound algorithms for general bipartite bilinear programs and to assess, through a broader computational study, the extent to which dynamically generated leap inequalities improve relaxation quality and overall solution performance.

\section*{Acknowledgments} 
The authors thank Dr. Margaret Wiecek for helpful discussions and insights during the early stages of this work. The authors also thank Yachen Liu and Dr. Weikun Chen for insightful discussions and for generously sharing ideas and references that helped guide the development of the separation algorithm.

\section*{Funding}
The second author was partially supported by the Air Force Office of Scientific Research (AFOSR) under Grant No.~FA9550-23-1-0508.

\section*{Data Availability}
No external datasets were used. All data needed to reproduce the numerical examples are contained in the article.

\bibliography{ref.bib}
\end{document}